\newtheorem{theorem}{Theorem}
\newtheorem{claim}[theorem]{Claim}
\newtheorem{lemma}[theorem]{Lemma}
\newtheorem{proposition}[theorem]{Proposition}
\theoremstyle{definition}
\newtheorem{definition}[theorem]{Definition}
\theoremstyle{remark}
\newtheorem{remark}[theorem]{Remark}
\numberwithin{theorem}{section}
\numberwithin{equation}{section}
\newcommand{\dd}{\; \mathrm{d}}
\def\Xint#1{\mathchoice
{\XXint\displaystyle\textstyle{#1}}
{\XXint\textstyle\scriptstyle{#1}}
{\XXint\scriptstyle\scriptscriptstyle{#1}}
{\XXint\scriptscriptstyle\scriptscriptstyle{#1}}
\!\int}
\def\XXint#1#2#3{{\setbox0=\hbox{$#1{#2#3}{\int}$}
\vcenter{\hbox{$#2#3$}}\kern-.6\wd0}}
\def\dashint{\Xint-}
\begin{document}

\title{Surfaces meeting porous sets in positive measure}
\author{Gareth Speight}
\email{G.Speight@Warwick.ac.uk}
\thanks{This work was done while the author was a PhD student of David Preiss and supported by EPSRC funding. I also thank Jaroslav Ti\v{s}er for suggesting improvements to the presentation.}

\begin{abstract}
Let $X$ be a Banach space and $2<n<\dim X$. We show there exists a directionally porous set $P$ in $X$ for which the set of $C^{1}$ surfaces of dimension $n$ meeting $P$ in positive measure is not meager. If $X$ is separable this leads to a decomposition of $X$ into the union of a $\sigma$-directionally porous set and a set which is null on residually many $C^{1}$ surfaces of dimension $n$. This is of interest in the study of $\Gamma_{n}$-null and $\Gamma$-null sets and their applications to differentiability of Lipschitz functions.
\end{abstract}
\maketitle

\section{Introduction}

We investigate the extent to which $C^{1}$ surfaces meet (directionally) porous sets (Definition \ref{porous}) in positive measure. By definition each point in a porous set sees nearby holes in the set of size proportional to their distance away. It is thus intuitively clear that porous sets are somehow small. It follows easily from the definition that porous sets are nowhere dense and hence $\sigma$-porous sets are meager. Further, if $P$ is a porous (directionally porous) set in a Banach space then the Lipschitz map $x\mapsto \mathrm{dist}(x,P)$ is Fr\'{e}chet (G\^{a}teaux) differentiable at no point of $P$. If the Banach space is finite dimensional it follows by the classical Rademacher theorem that $P$ has Lebesgue measure zero.

Porous sets have been widely studied (see \cite{Zaj1} and \cite{Zaj2} for surveys of the area) and recently have been used in the study of differentiability. It has been of much interest to what extent an analogue of Rademacher's theorem, either with G\^{a}teaux or Fr\'{e}chet differentiability, holds for Lipschitz functions defined on infinite dimensional Banach spaces (see \cite{GNFA} for an introduction and \cite{LPT}, \cite{Gamman} for some recent developments relevant to us). 

Since Lebesgue measure is unavailable in infinite dimensional Banach spaces some other notion of null set is needed to say the set of points of non differentiability of a Lipschitz function is small. The classes of most interest to us are the $\Gamma_{n}$-null sets (Definition \ref{gamman}) and $\Gamma$-null sets introduced in \cite{LPT} and \cite{Gamman} respectively. These are sets which are null on typical $n$ dimensional $C^{1}$ surfaces (or suitably defined infinite dimensional surfaces for the $\Gamma$-null case).

In separable Banach spaces there are several well known classes of null sets (for example, Aronszajn null, Haar null and Gauss null sets) with respect to which real valued Lipschitz functions are G\^{a}teaux differentiable almost everywhere (Theorem 6.42 \cite{GNFA}, also see \cite{Aron}, \cite{Christensen} and \cite{Mank}). The $\Gamma$-null sets also have this property (Theorem 2.5 \cite{Gamman}). These classes of null sets form $\sigma$-ideals (in other words they are closed under taking subsets and countable unions) so give a reasonable notion of null set.

Since $x\mapsto \mathrm{dist}(x,P)$ is G\^{a}teaux differentiable at no point of a directionally porous set $P$ it follows $\sigma$-directionally porous sets must be null in all of the above senses. A Borel set $E\subset X$ is Haar null if there exists a Borel probability measure $\mu$ on $X$ such that $\mu(x+E)=0$ for all $x \in X$. It follows from this definition that any $\sigma$-directionally porous set is null on many lines in $X$. On the other hand, it can be shown (Theorem 6.39 \cite{GNFA}, see also \cite{unexpected}) that any infinite dimensional separable Banach space contains a $\sigma$-porous set whose complement is null on all lines. Such a $\sigma$-porous set is not Haar null. This illustrates that in infinite dimensional spaces directionally porous sets are much smaller than porous sets. In finite dimensions, as one might expect, it follows from compactness of the unit sphere that porous and directionally porous sets coincide.

Proving existence of Fr\'{e}chet derivatives is much more difficult. The main known result is if a Banach space $X$ has separable dual then every real valued Lipschitz function on $X$ is Fr\'{e}chet differentiable on a dense set \cite{Frechet}. However, this is not an `almost everywhere' type result. Indeed, it is not even known if three real valued Lipschitz functions on a separable Hilbert space have a common point of Fr\'{e}chet differentiability \cite{LPT}. 

It has recently been shown if $X^{\ast}$ is separable then every real valued Lipschitz map on $X$ is Fr\'{e}chet differentiable outside a $\Gamma$-null set if and only if every $\sigma$-porous subset of $X$ is $\Gamma$-null (Corollary 3.12 \cite{Gamman}). This implies, for example, that any real valued Lipschitz function on $c_{0}$ or $C(K)$ (for $K$ countable compact) is Fr\'{e}chet differentiable outside a $\Gamma$-null set (Theorem 4.6 \cite{Gamman}). It is known (Theorem 5.4.2 \cite{LPT}) that $G_{\delta}$ sets are $\Gamma$-null if they are $\Gamma_{n}$-null for infinitely many $n$. Thus it is desirable to understand when porous sets are either $\Gamma_{n}$-null or $\Gamma$-null.

It can be shown if $n \geq \dim X$ then $\Gamma_{n}$-null and Lebesgue null sets coincide (Theorem 5.3.8 \cite{LPT}). If $n<\dim X$ the situation is much more interesting. Every $\sigma$-porous subset of a Banach space with separable dual is $\Gamma_{1}$-null (Theorem 10.4.1 \cite{LPT}) and every $\sigma$-directionally porous subset of a separable Banach space is $\Gamma_{1}$-null and $\Gamma_{2}$-null (Theorem 10.4.2 \cite{LPT}). We show the situation is very different for higher dimensional surfaces - even directionally porous sets need not be $\Gamma_{n}$ null when $2<n<\dim X$ (Theorem \ref{dp}) and if $X$ is separable the complement of a $\sigma$-directionally porous set may be $\Gamma_{n}$-null (Theorem \ref{sdp}).

One might ask if there is any differentiability result using only the notion of $\Gamma_{n}$-null sets rather than $\Gamma$-null sets. It has been shown (Theorem 11.3.6 \cite{LPT}) that if $X^{\ast}$ is separable and every porous set in $X$ can be decomposed into the union of a $\sigma$-directionally porous set and a $\Gamma_{n}$-null set of class $G_{\delta}$, then Lipschitz maps on $X$ into Banach spaces of dimension not exceeding $n$ have points of $\varepsilon$-Fr\'{e}chet differentiability for every $\varepsilon > 0$. At the time it was not yet known if porous sets in some infinite dimensional spaces were necessarily $\Gamma_{n}$-null. By showing even directionally porous sets in infinite dimensional Banach spaces need not be $\Gamma_{n}$-null we answer a question posed in \cite{LPT} (pages 186 and 203) and show that for the theorem mentioned to be meaningful the more complicated hypothesis is necessary. That is, it could not instead be more simply assumed porous sets are $\Gamma_{n}$-null.

We now give the formal definitions that will be relevant for us. In what follows $B(x,r)$ will denote the open ball in $X$ with centre $x \in X$ and radius $r>0$.

\begin{definition}\label{porous}
A set $P \subset X$ is called porous if there exists $0<\rho<1$ such that for all $x \in P$ and $\delta >0$ there exists $y \in X$ with $\|y-x\|<\delta$ such that
\[B(y,\rho \|y-x\|)\cap P = \varnothing.\]
We refer to the ball $B(y,\rho \|y-x\|)$ as a hole in $P$.

A set $P \subset X$ is called directionally porous if there exists $0<\rho<1$ such that for all $x \in P$ there exists $v \in X$ with $\|v\|=1$ such that for any $\delta>0$ there exists $t \in \mathbb{R}$ with $|t|<\delta$ such that
\[B(x+tv,\rho |t|)\cap P = \varnothing.\]

We refer to the constant $\rho$ appearing in the above definitions as a porosity constant of $P$. 

A set is called $\sigma$-porous ($\sigma$-directionally porous) if it is a union of countably many porous (directionally porous) sets.
\end{definition}

Recall a subset of a metric space is called typical, or residual, if its complement is meager.

\begin{definition}\label{gamman}
A $C^{1}$ surface of dimension $n$ in $X$ is a $C^{1}$ map $f\colon [0,1]^{n}\to X$. We define the $C^{1}$ norm by
\[ \| f \|_{C^{1}} = \max (\| f \|_{\infty}, \| {\partial f}/{\partial x_{1}}\|_{\infty}, \ldots, \| {\partial f}/{\partial x_{n}}\|_{\infty}) \]
where $\|\cdot \|_{\infty}$ denotes the supremum norm. Denote the space of $C^{1}$ surfaces of dimension $n$ in $X$ with $C^{1}$ norm by $\Gamma_{n}(X)$.

A set $N\subset X$ is called $\Gamma_{n}$-null if
\[ \mathcal{H}^{n}(f([0,1]^{n})\cap N)=0\]
for residually many $f \in \Gamma_{n}(X)$.
\end{definition}

\begin{remark}
In \cite{LPT} a set $N\subset X$ is defined to be $\Gamma_{n}$-null if
\[ \mathcal{L}^{n}\{t \in [0,1]^{n}: f(t) \in N\}=0\]
for residually many $f \in \Gamma_{n}(X)$. It is easy to see a set which is $\Gamma_{n}$-null in this sense is also $\Gamma_{n}$-null in the sense of Definition \ref{gamman}. In fact the two definitions are equivalent for $n\leq \dim X$. This follows from Lemma 5.3.5 \cite{LPT} which states that for a typical surface $f \in \Gamma_{n}(X)$ the derivative $df$ has rank equal to $\min(n,\dim X)$ at Lebesgue almost every point of $[0,1]^{n}$.
\end{remark}

We can now state precisely the results of this paper.

\begin{theorem}\label{dp}
Let $2<n<\dim X$. Then there exists a directionally porous set $P\subset X$ which is not $\Gamma_{n}$-null.
\end{theorem}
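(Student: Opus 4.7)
The plan is to construct an explicit directionally porous $P\subset X$ and exhibit a non-empty $C^1$-open set $U\subset\Gamma_n(X)$ and $\delta>0$ with $\mathcal{H}^n(f([0,1]^n)\cap P)\geq\delta$ for every $f\in U$. Since non-empty open sets in the Baire space $\Gamma_n(X)$ are non-meager, this immediately shows $P$ is not $\Gamma_n$-null.

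To construct $P$, I use $\dim X>n$ to pick a closed $(n+1)$-dimensional subspace $Z\subset X$ (automatically complemented, so it admits a bounded linear projection $\pi\colon X\to Z$) together with a splitting $Z=Y\oplus\mathbb{R}v$, where $v$ is a unit vector and $Y\subset Z$ a closed hyperplane. Fix a Lipschitz function $\phi\colon Y\to\mathbb{R}$ with very small Lipschitz constant, and a one-dimensional porous set $T\subset\mathbb{R}$ whose self-difference $T-T$ covers a neighbourhood of $0$ (for instance, a middle-thirds Cantor set). Set
\[ P=\pi^{-1}\bigl(\{y+(\phi(y)+t)v:y\in Y,\ t\in T\}\bigr), \]
the cylinder over a union of translates of the graph of $\phi$ inside $Z$, spaced along $v$ by $T$. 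The porosity of $T$, combined with the small Lipschitz constant of $\phi$ and the boundedness of $\pi$, gives $P$ a uniform directional porosity constant in the direction of a lift of $v$.

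A Lusin-type approximation (Rademacher together with Whitney's extension theorem) yields $\tilde\phi\in C^1$ agreeing with $\phi$ on a set $K$ of positive measure, so the surface $f_0(s):=\iota(s)+\tilde\phi(\iota(s))v$, for a linear embedding $\iota\colon[0,1]^n\to Y$, meets $P$ in positive $\mathcal{H}^n$-measure (taking $0\in T$ and using $s\in K$). The main obstacle is the robustness step: writing $\pi\circ f=(y_h,z_h)$ for $f=f_0+h$, the condition $f(s)\in P$ becomes $\eta_h(s):=z_h(s)-\phi(y_h(s))\in T$, and one must show $\mathcal{L}^n\{\eta_h\in T\}\geq\delta$ uniformly for $\|h\|_{C^1}$ small. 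The naive attempt fails because $T$ is Lebesgue-null while $\eta_h$ generically has non-vanishing derivative, so by the coarea estimate $\eta_h^{-1}(T)$ is typically null. The construction must therefore exploit the richness of $T-T$ near $0$ together with the extra parametric directions afforded by $n>2$, selecting for each $s$ in a positive-measure set a point $t(s)\in T$ matching $\eta_h(s)$ up to an error absorbed by the perturbation. Engineering $T$, $\phi$ and $P$ so that this selection is available uniformly in small $h$ is the core technical content, and is precisely what distinguishes the case $n>2$ from $n\leq2$, where $\sigma$-directionally porous sets are known to be $\Gamma_n$-null.
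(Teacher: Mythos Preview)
Your proposal has a genuine gap that, for the construction you describe, cannot be filled. You yourself flag the missing step: the ``robustness'' assertion that $\mathcal{H}^n(f([0,1]^n)\cap P)\geq\delta$ for all $f$ in a $C^1$-neighbourhood of $f_0$, and you concede that engineering $T$, $\phi$, $P$ to achieve this ``is the core technical content''. But for your $P$ no such open set of surfaces exists. Consider the one-parameter family $f_t=f_0+tv$ for $|t|<\epsilon$; these lie in every $C^1$-ball about $f_0$, have images in $Z$, and (since $v$ is transverse to the image of $f_0$) the map $(s,t)\mapsto f_0(s)+tv$ is a $C^1$ diffeomorphism onto its image in $Z$. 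Fubini and the area formula give
\[
\int_{-\epsilon}^{\epsilon}\mathcal{H}^n\bigl(f_t([0,1]^n)\cap P\bigr)\,dt
\;\lesssim\;
\mathcal{L}^{n+1}\bigl(P\cap Z\cap\text{image}\bigr)=0,
\]
because $P\cap Z=\{y+(\phi(y)+t)v:y\in Y,\ t\in T\}$ has null $(n{+}1)$-dimensional measure (each fibre over $y$ is a translate of the null set $T$). Hence $\mathcal{H}^n(f_t\cap P)=0$ for almost every $t$, so no $C^1$-open set around $f_0$ consists of surfaces meeting $P$ in positive measure. The richness of $T-T$ does not help: once $\eta_h$ has nonvanishing gradient, the coarea formula forces $\eta_h^{-1}(T)$ to be null there, exactly as you observe, and no choice of $T$ or $\phi$ with $\mathcal{L}^1(T)=0$ avoids this. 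Your outline also never explains where $n>2$ is actually used.

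The paper's argument is therefore of a different kind: it does \emph{not} produce an open set of good surfaces. Instead $P$ is written as $\bigl(\bigcap_k P_k\bigr)\setminus H$, where each $P_k$ is an \emph{open} enlargement of the small holes and $H$ is the union of all holes. Meeting the open set $P_k$ in measure $>\alpha$ is an open condition on $f\in\Gamma_n$, and the holes are arranged so that this condition is also dense in a fixed $C^1$-ball (it holds for a countable dense family of piecewise-affine surfaces). Baire then yields a residual set of surfaces meeting $\bigcap_k P_k$ in measure $>\alpha/2$. The hypothesis $n>2$ enters through a Poincar\'e--Sobolev inequality with exponent $2n/(n-2)$: a $C^1$ graph that reaches a hole of radius $h$ while staying low on at least half of a surrounding ball must satisfy $\int|\nabla g|^2\gtrsim h^n$ there. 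Summing, the total $n$-dimensional hole cross-section any single surface can meet is controlled by its Dirichlet energy plus small error terms, which gives the uniform bound $\mathcal{H}^n(f\cap H)\leq\alpha/4$ and closes the argument.
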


\begin{theorem}\label{sdp}
Let $X$ be separable and $2<n<\dim X$. Then there exists a $\sigma$-directionally porous set $Q\subset X$ and a $\Gamma_{n}$-null set $N\subset X$ such that $X=Q\cup N$.
\end{theorem}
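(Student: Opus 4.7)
The plan is to deduce Theorem \ref{sdp} from Theorem \ref{dp} by translating the directionally porous set provided there along a countable dense subset of $X$, then arguing with Baire category on $\Gamma_n(X)$ together with the Lebesgue density theorem in $[0,1]^n$. First I would extract a uniform statement: passing to the closure of $P$ preserves directional porosity with the same constant, so I may assume $P$ is closed. Then $f \mapsto \mathcal{L}^n(\{t : f(t) \in P\})$ is upper semicontinuous, so each $B_c := \{f : \mathcal{L}^n(\{t : f(t) \in P\}) \geq c\}$ is closed. Their union is the non-meager set from Theorem \ref{dp}, hence some $B_c$ has non-empty interior, yielding a non-empty open $U \subset \Gamma_n(X)$ and a $c > 0$ with $\mathcal{L}^n(\{t : f(t) \in P\}) \geq c$ for every $f \in U$. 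Fix a countable dense $\{x_i\}_{i=1}^\infty \subset X$ and set $Q := \bigcup_i (P + x_i)$ ($\sigma$-directionally porous) and $N := X \setminus Q$; by translation $\mathcal{L}^n(\{t : f(t) \in P + x_i\}) \geq c$ for every $f \in U + x_i$.

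To show $N$ is $\Gamma_n$-null, I would prove the localised claim: for every dyadic subcube $K \subset [0,1]^n$ the set
\[
G_K := \{f \in \Gamma_n(X) : \mathcal{L}^n(\{t \in K : f(t) \in Q\}) \geq c\, \mathcal{L}^n(K)\}
\]
contains an open dense subset of $\Gamma_n(X)$. Given this, $\bigcap_K G_K$ is residual by the Baire category theorem, and for every $f$ in this intersection the bad set $\{t : f(t) \in N\}$ has upper Lebesgue density at most $1 - c$ at every point, so is Lebesgue null by the Lebesgue density theorem. This yields $\mathcal{L}^n(f^{-1}(N)) = 0$ for residually many $f$, i.e.\ $N$ is $\Gamma_n$-null.

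The main obstacle is the localised claim. The natural attempt is an open-mapping rescaling: given non-empty open $V \subset \Gamma_n(X)$ and a cube $K$ of side $\delta$ and corner $t_K$, the map $T_K(f)(s) := f(t_K + \delta s)$ is a continuous linear surjection $\Gamma_n(X) \to \Gamma_n(X)$ of norm at most $1$, hence open, so $T_K(V)$ is open; if $T_K(V) \cap (U + x_i) \neq \varnothing$ for some $i$, pulling back gives open $V' \subset V$ with $\mathcal{L}^n(\{t \in K : f(t) \in P + x_i\}) \geq c\, \mathcal{L}^n(K)$ for all $f \in V'$. The subtlety is that $\bigcup_i (U + x_i)$ only covers surfaces differing from an element of $U$ by a constant in $X$, and $T_K(V)$ need not meet this union for arbitrary $V$. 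Overcoming this requires choosing $f_0 \in V$ and perturbing it on a neighbourhood of $K$ so that $T_K(f_0)$ lies on the constant-translate tube of $U$, and then selecting $x_i$ close to the required translate. This careful perturbation is the technical heart of the argument and is where separability of $X$ is used essentially.
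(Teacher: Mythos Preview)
Your overall architecture---extract a uniform lower bound on a ball $U$, translate, and run a Lebesgue-density argument over dyadic cubes---is reasonable, but the step you flag as ``the technical heart'' is not a detail; it is a genuine obstruction that the family $Q=\bigcup_i(P+x_i)$ cannot overcome. Translates $U+x_i$ consist of surfaces whose partial derivatives lie in a fixed $C^0$-neighbourhood of those of the centre of $U$; translation in $X$ does not move derivatives at all. Your restriction map $T_K(f)(s)=f(t_K+\delta s)$ has $\partial_i T_K(f)=\delta\,\partial_i f$, so for small $\delta$ the surface $T_K(f)$ has derivatives uniformly close to $0$. Unless the (unknown) ball $U$ happened to be centred at a constant surface, no translate of $U$ can contain $T_K(f)$, and a $C^1$-small perturbation of $f_0$ cannot fix this: changing the derivatives of $T_K(f_0)$ by a definite amount requires changing $\partial_i f_0$ by an amount of order $1/\delta$ on $K$, which is not a small $C^1$ perturbation. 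Replacing $T_K$ by the normalised map $(f(t_K+\delta s)-w)/\delta$ does not help either: its derivatives are now $\partial_i f(t_K+\delta s)$, which are arbitrary vectors in $X$ depending on $f$, and again translation cannot force them into the fixed derivative-window of $U$.

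The paper resolves exactly this by enlarging the family defining $Q$. It works with the concrete porous set $P\subset\mathbb{R}^{n+1}$ and, for every $(n{+}1)$-tuple $(v_1,\ldots,v_{n+1})$ of linearly independent vectors from the countable dense set, forms a bounded linear extension $\widetilde L\colon X\to\mathbb{R}^{n+1}$ sending $v_i\mapsto e_i$; then $Q$ is the countable union of all $w+\varepsilon\,\widetilde L^{-1}(P)$ over rational $\varepsilon>0$, $w$ in the dense set, and such $\widetilde L$. The transformation $T(f)(x)=\widetilde L\bigl((f(y+\varepsilon x)-w)/\varepsilon\bigr)$ is an open map $\Gamma_n(X)\to\Gamma_n(\mathbb{R}^{n+1})$, and by choosing $v_i$ close to $\partial_i f(z)$ one arranges $\partial_i T(f)\approx e_i$, so $T(f)$ lands in the reference ball around $p(x)=(x,0)$. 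The freedom to choose the linear frame $\widetilde L$ is precisely what absorbs the arbitrary tangent directions of a generic surface; your construction with translates alone lacks this degree of freedom, and that is why the argument cannot close.
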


Intuitively the reason for the difference between the cases $n\leq 2$ and $n>2$ is that for $n>2$ modifying a surface to go through a nearby hole causes an area change comparable to the size of the hole (this is stated precisely in Proposition \ref{area} which is a corollary of a Poincar\'e type inequality). By a careful construction we exploit this fact to construct a porous set where the size of holes a surface meets is controlled by the area of the surface.

Fix $n>2$ throughout the remainder of the paper. We focus on proving Theorem \ref{dp} in the case $X=\mathbb{R}^{n+1}$. Since porous sets and directionally porous sets coincide in finite dimensions it suffices to construct a porous set in $\mathbb{R}^{n+1}$ which is not $\Gamma_{n}$-null. In the final section we deduce the result for a general Banach space and see Theorem \ref{sdp} follows.

\section{Geometric conditions for the construction}

Let $A$ be a subset of a complete metric space $M$ and $U_{k}$ be a sequence of open sets which are each dense in a fixed open set $U\subset M$. If $\bigcap_{k=1}^{\infty} U_{k} \subset A$ then $U\setminus A$ is meager in $M$. Hence by the Baire Category Theorem $A$ is not meager in $M$. We now investigate what this means in terms of surfaces and porous sets.

Define the $C^{1}$ surface $p\colon [0,1]^{n} \to \mathbb{R}^{n+1}$ by $p(x)=(x,0)$ and and let $c=(1/2,\ldots,1/2) \in \mathbb{R}^{n}$. Fix $0<r<1/32$ to be chosen small later. 

If $g\colon B(c,t) \to \mathbb{R}$ let
\[G(g)=\{(x,g(x)):x \in B(c,t)\}\]
denote the graph of $g$. 

It will usually be simpler to work with surfaces represented as graphs.

\begin{lemma}
There exists $s>0$ (fixed and independent of $r$) and $\delta(r)>0$ such that for all $f \in B_{C^{1}}(p,\delta(r))$ there exists $g\colon B(c,s) \to \mathbb{R}$ of class $C^{1}$ with $\|g\|_{C^{1}}\leq r$ and
\[f([0,1]^{n})\cap (B(c,s) \times \mathbb{R}) = G(g).\]
\end{lemma}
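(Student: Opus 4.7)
The plan is to decompose $f = (f_1, f_2)$ with $f_1 \colon [0,1]^n \to \mathbb{R}^n$ and $f_2 \colon [0,1]^n \to \mathbb{R}$, so that the hypothesis $\|f - p\|_{C^1} < \delta$ becomes exactly the statement that $f_1$ is a $C^1$-small perturbation of the identity on $[0,1]^n$ and $f_2$ is $C^1$-small. The natural candidate is then $g := f_2 \circ f_1^{-1}$, once the local invertibility of $f_1$ around $c$ has been established; its smallness in $C^1$ will follow automatically from that of $f_2$ via the chain rule.

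First I would fix $s := 1/8$ (independent of $r$). Since each partial derivative satisfies $\|\partial f_1/\partial x_i(x) - e_i\| < \delta$ pointwise, the operator norm of $Df_1(x) - I$ is controlled by a constant depending only on $n$ times $\delta$. A standard contraction-mapping / quantitative inverse function theorem argument, applied to $f_1 = \mathrm{id} + (f_1 - \mathrm{id})$, then shows that for $\delta$ sufficiently small $f_1$ is injective on $\overline{B}(c, 1/4)$, its image contains $B(c, s)$, and the local inverse $\varphi := (f_1|_{\overline{B}(c, 1/4)})^{-1} \colon B(c, s) \to \overline{B}(c, 1/4)$ is $C^1$ with $\|D\varphi\|_{\infty} \leq 2$.

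Next I would set $g(y) := f_2(\varphi(y))$ on $B(c, s)$. The estimates $|g(y)| \leq \|f_2\|_{\infty} < \delta$ and $\|Dg(y)\| \leq \|Df_2\|_{\infty} \cdot \|D\varphi(y)\| \leq 2\delta$ are immediate, so taking $\delta(r)$ to be the minimum of $r/2$ and the threshold from the previous step yields $\|g\|_{C^1} \leq r$. The inclusion $G(g) \subseteq f([0,1]^n) \cap (B(c,s) \times \mathbb{R})$ is then obvious: for $y \in B(c, s)$ the point $x := \varphi(y) \in [0,1]^n$ satisfies $f(x) = (f_1(x), f_2(x)) = (y, g(y))$.

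For the reverse inclusion, suppose $f(x) \in B(c,s) \times \mathbb{R}$ for some $x \in [0,1]^n$. Then $|f_1(x) - x| < \delta$ and $|f_1(x) - c| < s$, so $|x - c| < s + \delta \leq 1/4$, forcing $x \in \overline{B}(c, 1/4)$ and hence $x = \varphi(f_1(x))$ and $f_2(x) = g(f_1(x))$. The main subtlety I expect is precisely this global step: one must rule out that some distant part of the surface folds back into the cylinder $B(c,s) \times \mathbb{R}$, and this is the one point in the argument where the \emph{global} $C^0$-closeness of $f_1$ to the identity on all of $[0,1]^n$ is used, rather than just its local behaviour near $c$. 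Everything else reduces to the routine quantitative inverse function theorem.
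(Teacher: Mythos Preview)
Your proposal is correct and follows essentially the same approach as the paper: apply a quantitative inverse function theorem to the first $n$ components $\widetilde{f}=(f_{1},\ldots,f_{n})$ of $f$ near $c$, then set $g=f_{n+1}\circ\widetilde{f}^{-1}$ and read off the $C^{1}$ smallness of $g$ from that of $f_{n+1}$ via the chain rule. You are in fact more thorough than the paper's own proof, which does not spell out the reverse-inclusion step (that no distant part of the surface folds back into the cylinder $B(c,s)\times\mathbb{R}$) that you correctly identify as the one place where the global $C^{0}$-closeness of $f_{1}$ to the identity is used.
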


\begin{proof}
Let $e_{1}, \ldots, e_{n}$ denote the standard basis of $\mathbb{R}^{n}$. We apply the Inverse Function Theorem to $\widetilde{f}=(f_{1},\ldots, f_{n})$ which is a $C^{1}$ mapping from $[0,1]^{n}$ to $\mathbb{R}^{n}$ and satisfies $\| \partial \widetilde{f} / \partial x_{i} - e_{i} \|_{\infty}<\delta(r)$ for $i=1,\ldots, n$. Note the size of the neighbourhood on which $\widetilde{f}$ is invertible can be made independent of the particular function $f$ and depend only on bounds on its derivative (this is clear from the proof of the Inverse Function Theorem given in \cite{IFT}). 

Provided $\delta(r)$ is sufficiently small (independently of $f$) $\widetilde{f}$ is invertible on a region whose image contains $c$ and has a $C^{1}$ inverse $(\widetilde{f})^{-1}\colon B(c,s)\to \mathbb{R}^{n}$ with $s>0$ independent of $r, f$ and $\|\partial (\widetilde{f})^{-1} / \partial x_{i}\|_{\infty} \leq 2$ for $i=1, \ldots, n$. Define the $C^{1}$ map $g\colon B(c,s)\to \mathbb{R}$ by $g=f_{n+1}\circ (\widetilde{f})^{-1}$. Since $\|f_{n+1}\|_{C^{1}} <\delta(r)$ we have $\|g\|_{C^{1}}\leq r$ for sufficiently small $\delta (r)$.
\end{proof}

Let $\mathcal{F}$ be a (not necessarily disjoint) collection of open balls in $\mathbb{R}^{n+1}$ and $L>1$. Intuitively $\mathcal{F}$ will correspond to the holes of a porous set. Define, for $k \geq 1$,
\begin{equation}\label{Pk}
P_{k}= \bigcup_{\substack{B \in \mathcal{F}\\ \mathrm{diam} B < 1/k}} LB, \quad H=\bigcup_{B \in \mathcal{F}} B,
\end{equation}
where $LB$ denotes the ball with the same centre as $B$ but radius enlarged by factor ~$L$.

Notice $P_{k}$ is open and $P_{k+1}\subset P_{k}$ for all $k \geq 1$. Further
\[P= \left( \bigcap_{k=1}^{\infty} P_{k} \right) \setminus H\]
is porous with porosity constant $1/L$. Informally $\bigcap_{k=1}^{\infty} P_{k}$ consists of points which have a nearby hole, of radius proportional to its distance from the point, of diameter less than $1/k$ and $H$ is the union of all the holes (of any size).

Given $A\subset \mathbb{R}^{n+1}$ denote the set of $n$ dimensional surfaces which meet $A$ in $n$ dimensional measure greater than $\alpha \geq 0$ by
\[S_{n}(A,\alpha)=\{f \in \Gamma_{n}(X):\mathcal{H}^{n}(A\cap f([0,1]^{n}))>\alpha\}.\]

To prove Theorem \ref{dp} in the case $X=\mathbb{R}^{n+1}$ it suffices to construct $\mathcal{F}$ and $L>1$ for which there is $\alpha>0$ such that, for sufficiently small $r$,
\begin{equation}\label{first}
S_{n}((B(c,s)\times \mathbb{R})\cap P_{k},\alpha) \text{ is dense in } B_{C^{1}}(p,\delta(r)) \text{ for all } k \geq 1
\end{equation}
and
\begin{equation}\label{second}
\mathcal{H}^{n}(f([0,1]^{n})\cap (B(c,s)\times \mathbb{R}) \cap H) \leq \alpha/4 \text{ for all } f \in B_{C^{1}}(p,\delta(r)).
\end{equation} 

As the sets $P_{k}$ are decreasing it follows
\[ \bigcap_{k=1}^{\infty} S_{n}((B(c,s)\times \mathbb{R})\cap P_{k},\alpha) \subset S_{n}\left( \bigcap_{k=1}^{\infty} (B(c,s)\times \mathbb{R})\cap P_{k},\alpha/2 \right).\] 
Since the sets $P_{k}$ are open each of the sets $S_{n}((B(c,s)\times \mathbb{R})\cap P_{k},\alpha)$ is also open. By \eqref{first} they are also dense in $B_{C^{1}}(p,\delta(r))$ so it follows, by the discussion at the start of this section, that the complement of
\[S_{n}\left( \bigcap_{k=1}^{\infty} (B(c,s)\times \mathbb{R})\cap P_{k},\alpha/2 \right)\]
is meager in $B_{C^{1}}(p,\delta(r))$. Then by \eqref{second},
\[S_{n}\left( \bigcap_{k=1}^{\infty} (B(c,s)\times \mathbb{R})\cap P_{k},\alpha/2 \right) \subset S_{n}(P,\alpha/4)\]
so the complement of $S_{n}(P,\alpha/4)$ is meager in $B_{C^{1}}(p,\delta(r))$. By the Baire Category Theorem, as in the discussion at the start of this section, this will prove Theorem \ref{dp} in the case $\mathbb{R}^{n+1}$.

Thus proving Theorem \ref{dp} amounts to constructing smaller and smaller open balls (intuitively the holes of the porous set) whose enlargements mostly cover surfaces in a countable dense set and so that the intersection of all balls with any one surface is kept small.

\section{Construction of the porous set}

One of the requirements on $\mathcal{F}$ is that the enlargements of balls in $\mathcal{F}$ of arbitrarily small radii must cover a fixed proportion of each surface in a countable dense subset of $B_{C^{1}}(p,\delta(r))$. To do this it is natural to choose our countable dense subset to be as simple as possible. 

The following lemma follows from Corollary 10.2.2 \cite{LPT}, which states surfaces in which almost every point has a neighbourhood on which the surface is affine are dense in $\Gamma_{n}(\mathbb{R}^{n+1})$, and allows us to choose a countable dense subset consisting of surfaces that are mostly covered by countably many planes. 

Let $\omega_{n}$ denote the volume of a unit ball in $\mathbb{R}^{n}$.

\begin{lemma}
There is a countable dense set of surfaces $\{f_{l}\}_{l=1}^{\infty}$ in $B_{C^{1}}(p,\delta(r))$ and a countable collection of affine planes $A_{l}(x)=(x,a_{l}(x))$, where $x \in [0,1]^{n}$, $\nabla a_{l}$ is constant and $|\nabla a_{l}| \leq r$, such that
\[\mathcal{H}^{n} \left( \bigcup_{l=1}^{\infty} f_{l}([0,1]^{n}) \setminus \bigcup_{l=1}^{\infty} A_{l}([0,1]^{n}) \right) < \omega_{n}s^{n}/4.\]
Without loss of generality let $f_{1}=A_{1}=p$. 
\end{lemma}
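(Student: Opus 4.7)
The plan is to bootstrap from the cited density result in \cite{LPT} (Corollary 10.2.2) and then convert the local affine pieces of each approximating surface into global affine graphs over $[0,1]^n$, using that everything in $B_{C^{1}}(p,\delta(r))$ is $C^{1}$-close to the flat parametrization $p$.

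First I would use separability of $\Gamma_{n}(\mathbb{R}^{n+1})$ with the $C^{1}$ topology to choose a countable dense subset $\{g_{l}\}_{l=1}^{\infty}$ of $B_{C^{1}}(p,\delta(r))$ (with $g_1=p$). Then, for each $l$, I would apply Corollary 10.2.2 \cite{LPT} to approximate $g_{l}$ in $C^{1}$ norm within $1/l$ by a surface $f_{l}\in B_{C^{1}}(p,\delta(r))$ with the property that there exists a Lebesgue-null set $E_{l}\subset [0,1]^{n}$ such that every $t\in [0,1]^{n}\setminus E_{l}$ has an open neighbourhood on which $f_{l}$ agrees with an affine map. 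The resulting family $\{f_{l}\}$ is still dense in $B_{C^{1}}(p,\delta(r))$, and we may take $f_{1}=p$.

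Next I would convert local affine pieces to global affine graphs. Fix $l$. Cover $[0,1]^{n}\setminus E_{l}$ by countably many open sets $U_{l,j}$ on which $f_{l}$ is the restriction of an affine map $B_{l,j}\colon \mathbb{R}^{n}\to \mathbb{R}^{n+1}$. Because $\|f_{l}-p\|_{C^{1}}<\delta(r)$, the first $n$ coordinates $\widetilde{B}_{l,j}$ form an affine map with linear part within $\delta(r)$ of the identity (so invertible for $\delta(r)$ small), and the last coordinate has constant gradient bounded in norm by $\delta(r)\leq r$. Consequently the image $B_{l,j}(U_{l,j})$ lies in a genuine affine hyperplane that is the graph of an affine function $a_{l,j}\colon \mathbb{R}^{n}\to \mathbb{R}$ satisfying $|\nabla a_{l,j}|\leq r$; I would set $A_{l,j}(x)=(x,a_{l,j}(x))$ for $x\in [0,1]^{n}$. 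Re-enumerating the countable family $\{A_{l,j}\}_{l,j}$ as $\{A_{l}\}_{l=1}^{\infty}$ and placing $A_{1}=p$ first produces the required planes.

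Finally, for the measure estimate, observe that
\[
\bigcup_{l}f_{l}([0,1]^{n})\setminus \bigcup_{l,j}A_{l,j}([0,1]^{n}) \;\subset\; \bigcup_{l}f_{l}(E_{l}),
\]
since on the complement of $E_{l}$ the image of $f_{l}$ is contained in one of the affine planes $A_{l,j}([0,1]^{n})$. Each $f_{l}$ is $C^{1}$ on $[0,1]^{n}$ and hence Lipschitz, so $\mathcal{H}^{n}(f_{l}(E_{l}))=0$, giving $\mathcal{H}^{n}$-measure zero for the right hand side, well below the requested bound $\omega_{n}s^{n}/4$. The main delicate point, though conceptually routine, is the conversion step in paragraph two: checking that the $C^{1}$ closeness to $p$ forces each local affine piece to be a graph with the right gradient bound, so that extending to a global plane over $[0,1]^{n}$ does not destroy the containment property.
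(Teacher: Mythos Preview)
Your proposal is correct and follows exactly the route the paper indicates: the paper itself gives no proof beyond citing Corollary~10.2.2 of \cite{LPT}, and you have supplied the natural details. One small caveat on the containment step you flag at the end: the inclusion $f_l(U_{l,j})\subset A_{l,j}([0,1]^n)$ can fail for $x$ within $\delta(r)$ of $\partial[0,1]^n$ (the first $n$ coordinates of $f_l(x)$ may leave $[0,1]^n$), and similarly $|\nabla a_{l,j}|$ picks up a factor $\|\widetilde{B}_{l,j}^{-1}\|$ rather than being literally bounded by $\delta(r)$; both are harmless, since $\delta(r)$ may be taken small relative to $r$ and the lemma is only ever applied over $B(c,s)\times\mathbb{R}$, well away from the boundary, with the slack $\omega_n s^n/4$ there precisely to absorb such technicalities.
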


We will define $\mathcal{F}$ and $L>1$ so that, if $P_{k}$ is defined as in \eqref{Pk},
\begin{equation}\label{third}
\mathcal{H}^{n}(A_{l}([0,1]^{n})\cap (B(c,s)\times \mathbb{R}) \setminus P_{k}) \leq \omega_{n}s^{n}/2^{l+2} \text{ for all } k,l\geq 1. 
\end{equation}
This will imply
\[\mathcal{H}^{n}(f_{l}([0,1]^{n})\cap (B(c,s)\times \mathbb{R})\cap P_{k}) > \omega_{n}s^{n}/2 \text{ for all } k,l \geq 1\]
and hence $f_{l} \in S_{n}((B(c,s)\times \mathbb{R})\cap P_{k},\omega_{n}s^{n}/2)$ for all $k,l \geq 1$ which implies \eqref{first} with $\alpha = \omega_{n}s^{n}/2$.

In Section 5 we then show that if $r$ is sufficiently small our construction ensures
\[\mathcal{H}^{n}(f([0,1]^{n})\cap (B(c,s)\times \mathbb{R}) \cap H) \leq \omega_{n}s^{n}/8 \text{ for all } f \in B_{C^{1}}(p,\delta(r))\]
which is \eqref{second} with $\alpha = \omega_{n}s^{n}/2$. Theorem \ref{dp} hence follows in the case $X=\mathbb{R}^{n+1}$.

Choose a sequence $m_{k}$ of natural numbers with $m_{1}=1$ in which every natural number is repeated infinitely many times. Fix a sequence $\varepsilon_{i}>0$ with $\varepsilon_{i}<1/2^{i}$ and $3\sum_{i=1}^{\infty}\varepsilon_{i}\leq 1/64$ to be chosen as small as required later.

Let $r_{0}=s$. For $k\geq 1$ we will inductively define $\mathcal{F}_{k}$ and $r_{k}>0$ such that $\mathcal{F}_{k}$ consists of finitely many balls in $\mathbb{R}^{n+1}$, of radius less then $\varepsilon_{k}^{3} r_{k-1}$ and greater than $r_{k}$, whose enlargements (by a factor independent of $k$) mostly cover $A_{m_{k}}([0,1]^{n})$. The family $\mathcal{F}_{k}$ will consist of subfamilies of balls on different levels relative to $A_{m_{k}}$ constructed so there are relatively few balls on any one level. Each of these subfamilies of $\mathcal{F}_{k}$ is formed by lifting families of balls $\mathcal{G}_{k}^{l}$ in $\mathbb{R}^{n}$ to $\mathbb{R}^{n+1}$.

Fix $k\geq 1$ for which $r_{k-1}$ has been defined. We show how to define $\mathcal{G}_{k}^{l}$ inductively and hence define $\mathcal{F}_{k}$ and $r_{k}$. Let $\mathcal{G}_{k}^{0}=\varnothing$ and $r_{k}^{0}=r_{k-1}$.

\begin{lemma}\label{inductive}
Fix $l\geq 0$. Suppose families $\mathcal{G}_{k}^{0}, \ldots, \mathcal{G}_{k}^{l}$ of finitely many balls in $\mathbb{R}^{n}$ and $r_{k}^{0}, \ldots, r_{k}^{l}>0$ have been constructed so that (if $l \geq 1$):
\begin{itemize}
	\item For each $1 \leq q \leq l$, $\mathcal{G}_{k}^{q}$ consists of balls of radius $r_{k}^{q} \leq \varepsilon_{k}^{3}r_{k}^{q-1}$.
	\item For each $1 \leq q \leq l$, the balls in
	\[\{(1/\varepsilon_{k}^{3})B:B \in \mathcal{G}_{k}^{q}\}\]
	are disjoint and contained inside $B(c,s)$.
	\item Any two balls from
	\[\{(1/\varepsilon_{k}^{3})B:B \in \mathcal{G}_{k}^{1} \cup \ldots \cup \mathcal{G}_{k}^{l}\}\]
	are either disjoint or one is contained inside the other.
\end{itemize}
Then there exists a family $\mathcal{G}_{k}^{l+1}$ of finitely many balls in $\mathbb{R}^{n}$ which cover at least $\varepsilon_{k}^{3n}/2^{n+1}$ proportion of
\[B(c,s) \setminus \bigcup_{B \in \mathcal{G}_{k}^{1} \cup \ldots \cup \mathcal{G}_{k}^{l}} B\]
and $0<r_{k}^{l+1}\leq \varepsilon_{k}^{3}r_{k}^{l}$ such that:
\begin{itemize}
	\item Balls in $\mathcal{G}_{k}^{l+1}$ have radius $r_{k}^{l+1}$.
	\item The balls in
	\[\{(1/\varepsilon_{k}^{3})B:B \in \mathcal{G}_{k}^{l+1}\}\]
	are disjoint and contained inside $B(c,s)$.
	\item Any two balls from 
	\[\{(1/\varepsilon_{k}^{3})B:B \in \mathcal{G}_{k}^{1} \cup \ldots \cup \mathcal{G}_{k}^{l+1}\}\]
	are either disjoint or one is contained inside the other.
\end{itemize}
\end{lemma}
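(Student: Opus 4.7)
Let $E = B(c,s) \setminus \bigcup_{q=1}^{l} \bigcup_{B \in \mathcal{G}_k^q} B$. My plan is to extract $\mathcal{G}_k^{l+1}$ via a maximal disjoint (Vitali-type) packing of small balls inside a suitable safety region of $E$, engineered so that every structural requirement is automatic.

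Fix $r_k^{l+1}>0$ with $r_k^{l+1}\leq \varepsilon_k^{3} r_k^{l}$, to be chosen small enough later, and set $R = r_k^{l+1}/\varepsilon_k^{3}$. Let $E^{\ast}\subset E$ consist of the $x\in E$ satisfying $\mathrm{dist}(x,\partial B(c,s))\geq R$, $\mathrm{dist}(x,\partial((1/\varepsilon_k^{3})B))\geq R$ for every $B\in \mathcal{G}_k^{1}\cup\cdots\cup\mathcal{G}_k^{l}$, and $\mathrm{dist}(x,B)\geq r_k^{l+1}$ for every such $B$. Each of these conditions excludes only a thin tubular neighbourhood of one of finitely many fixed $(n-1)$-dimensional spheres (the collection of spheres is independent of $r_k^{l+1}$), so the total Lebesgue measure excluded tends to $0$ as $r_k^{l+1}\to 0$. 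Hence for all sufficiently small $r_k^{l+1}$ we have $|E^{\ast}|\geq |E|/2$; fix such an $r_k^{l+1}$.

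Now extract a maximal collection of disjoint balls $B(x_1,R),\ldots,B(x_m,R)$ with centres $x_i \in E^{\ast}$; finiteness is automatic as $E^{\ast}\subset B(c,s)$ is bounded. By maximality and openness of $E^\ast$, every point of $E^{\ast}$ lies within distance $2R$ of some $x_i$, so $E^{\ast}\subset \bigcup_i B(x_i,2R)$ and therefore $\sum_i |B(x_i,R)|\geq |E^{\ast}|/2^{n}$. Define $\mathcal{G}_k^{l+1}=\{B(x_i,r_k^{l+1}):1\leq i\leq m\}$. The first two safety conditions force each enlargement $B(x_i,R)=(1/\varepsilon_k^3)B(x_i,r_k^{l+1})$ to lie inside $B(c,s)$ and to be either contained in or disjoint from every previously chosen enlarged ball $(1/\varepsilon_k^{3})B$; combined with the nested-or-disjoint structure of the earlier families, this yields the three structural conclusions for $\mathcal{G}_k^{l+1}$. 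The third condition gives $B(x_i,r_k^{l+1})\subset E$, and disjointness of the enlargements passes to the small balls themselves, so
\[\Big|E\cap\bigcup_i B(x_i,r_k^{l+1})\Big|=\sum_i |B(x_i,r_k^{l+1})|=\varepsilon_k^{3n}\sum_i |B(x_i,R)|\geq \frac{\varepsilon_k^{3n}}{2^{n}}|E^{\ast}|\geq \frac{\varepsilon_k^{3n}}{2^{n+1}}|E|,\]
giving the required covering proportion.

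The only delicate point is the design of $E^{\ast}$: one must absorb all three structural properties (containment in $B(c,s)$, the nested-or-disjoint relation with previous enlargements, and containment of each small ball in $E$) into a single measure-shrinking condition that can be made harmless by taking $r_k^{l+1}$ small. After that the boundary-measure estimate $|E\setminus E^{\ast}|\leq |E|/2$ and the Vitali-type packing step are routine.
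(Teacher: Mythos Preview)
Your proof is correct and follows essentially the same approach as the paper: choose $r_k^{l+1}$ small enough that a safety region avoiding the relevant boundaries retains at least half the measure of $E$, then run a Vitali-type maximal packing and compare volumes. Two minor remarks: your appeal to ``openness of $E^\ast$'' is both unnecessary and slightly off (as written with $\geq$ inequalities $E^\ast$ is closed), since maximality alone gives $E^\ast\subset\bigcup_i B(x_i,2R)$; and your extra safety condition $\mathrm{dist}(x,B)\geq r_k^{l+1}$, which ensures the small balls actually sit inside $E$, is a point the paper's proof leaves implicit.
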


\begin{proof}
Find $0<r_{k}^{l+1}\leq \varepsilon_{k}^{3}r_{k}^{l}$ such that at least half of the points in 
\[B(c,s) \setminus \bigcup_{B \in \mathcal{G}_{k}^{1} \cup \ldots \cup \mathcal{G}_{k}^{l}} B\]
are of distance at least $r_{k}^{l+1}/\varepsilon_{k}^{3}$ away from the closed set
\[(\mathbb{R}^{n}\setminus B(c,s)) \cup \bigcup_{B \in \mathcal{G}_{k}^{1}\cup \ldots \cup \mathcal{G}_{k}^{l}} \partial ((1/\varepsilon_{k}^{3})B).\]
By inductively choosing balls of radius $r_{k}^{l+1}$, whose centres are of distance at least $2r_{k}^{l+1}/\varepsilon_{k}^{3}$ from centres of previously chosen balls, we can find a finite family $\mathcal{G}_{k}^{l+1}$ of balls of radius $r_{k}^{l+1}$ such that balls in $\{(1/\varepsilon_{k}^{3})B:B \in \mathcal{G}_{k}^{l+1}\}$ are disjoint, contained in
\[B(c,s) \setminus \bigcup_{B \in \mathcal{G}_{k}^{1}\cup \ldots \cup \mathcal{G}_{k}^{l}} \partial ((1/\varepsilon_{k}^{3})B)\]
and balls in $\{(2/\varepsilon_{k}^{3})B:B \in \mathcal{G}_{k}^{l+1}\}$ cover at least half of
\[B(c,s)\setminus \bigcup_{B \in \mathcal{G}_{k}^{1}\cup \ldots \cup \mathcal{G}_{k}^{l}} B.\]
Then
\begin{align*}
\mathcal{L}^{n}\left( \bigcup_{B \in \mathcal{G}_{k}^{l+1}} B \right) &= \sum_{B \in \mathcal{G}_{k}^{l+1}} \mathcal{L}^{n}(B)\\
&=(\varepsilon_{k}^{3}/2)^{n}\sum_{B \in \mathcal{G}_{k}^{l+1}} \mathcal{L}^{n}((2/\varepsilon_{k}^{3})B)\\
&\geq (\varepsilon_{k}^{3}/2)^{n}\mathcal{L}^{n}\left( B(c,s)\setminus \bigcup_{B \in \mathcal{G}_{k}^{1}\cup \ldots \cup \mathcal{G}_{k}^{l}} B \right)/2
\end{align*}
as required.
\end{proof}

Once $\mathcal{G}_{k}^{l}$ has been defined if
\[\mathcal{L}^{n} \left( B(c,s)\setminus \bigcup_{B \in \mathcal{G}_{k}^{1}\cup \ldots \cup \mathcal{G}_{k}^{l}} B \right) \leq \omega_{n}s^{n}/2^{k+3}\]
we stop, set $r_{k}=r_{k}^{l}$ and let
\[\mathcal{G}_{k}=\mathcal{G}_{k}^{1}\cup \ldots \cup \mathcal{G}_{k}^{l}.\]
Otherwise we continue and define $\mathcal{G}_{k}^{l+1}$ and $r_{k}^{l+1}$ as in Lemma \ref{inductive}. Since at each stage we cover a fixed proportion (independent of $l$) of the region not yet covered this process stops in a finite number of steps.

To define $\mathcal{F}_{k}$ from $\mathcal{G}_{k}$ we replace each ball
\[B(x,t)\in \mathcal{G}_{k}\]
by
\[B((x,a_{m_{k}}(x)+2t),t) \in \mathcal{F}_{k}.\]

Inductively define $\mathcal{F}_{k}$ and $r_{k}$ as above for all $k \geq 1$, then let $\mathcal{F}=\cup_{k=1}^{\infty} \mathcal{F}_{k}$ and $L=\sqrt{10}$.

\begin{proposition}\label{cover}
If $P_{k}$ are defined from $\mathcal{F}, L$ as in \eqref{Pk} then
\[\mathcal{H}^{n}(A_{m_{k}}([0,1]^{n})\cap (B(c,s)\times \mathbb{R}) \setminus P_{k}) \leq \omega_{n}s^{n}/2^{k+2}\]
for all $k \geq 1.$
\end{proposition}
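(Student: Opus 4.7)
The plan is to show that $P_k$ covers the graph of $a_{m_k}$ over $\bigcup_{B \in \mathcal{G}_k} B$; the uncovered portion of $A_{m_k}([0,1]^n) \cap (B(c,s) \times \mathbb{R})$ then sits above the leftover set $B(c,s) \setminus \bigcup_{B \in \mathcal{G}_k} B$, whose Lebesgue measure is controlled by the stopping criterion built into the inductive construction of $\mathcal{G}_k$.

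The key geometric observation is that for each $B(x_0, t) \in \mathcal{G}_k$, with lifted ball $B := B((x_0, a_{m_k}(x_0) + 2t), t) \in \mathcal{F}_k$, the enlargement $LB$ contains the graph piece $\{(x, a_{m_k}(x)) : x \in B(x_0, t)\}$. I would verify this by a direct calculation: for such $x$, the bound $|\nabla a_{m_k}| \leq r$ yields $|a_{m_k}(x) - a_{m_k}(x_0)| < rt$, so
\[ \|(x, a_{m_k}(x)) - (x_0, a_{m_k}(x_0) + 2t)\|^2 < t^2 + (2+r)^2 t^2, \]
which is less than $10 t^2 = L^2 t^2$ as soon as $r < 1$. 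One must also check that each $B \in \mathcal{F}_k$ satisfies $\mathrm{diam}\, B < 1/k$, so that $LB$ actually contributes to $P_k$ under \eqref{Pk}; this is automatic, since radii in $\mathcal{F}_k$ are bounded by $\varepsilon_k^3 r_{k-1} \leq 2^{-3k} s$, giving diameters at most $2^{-3k+1} s < 1/k$ for every $k \geq 1$.

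Combining these two facts, $P_k$ contains the graph of $a_{m_k}$ over $\bigcup_{B \in \mathcal{G}_k} B$, so the set whose measure we wish to estimate is contained in the graph of $a_{m_k}$ over $B(c,s) \setminus \bigcup_{B \in \mathcal{G}_k} B$. The stopping rule yields
\[ \mathcal{L}^n \bigl( B(c,s) \setminus \bigcup_{B \in \mathcal{G}_k} B \bigr) \leq \omega_n s^n / 2^{k+3}, \]
and the area formula applied to this $C^1$ graph, with area factor $\sqrt{1+|\nabla a_{m_k}|^2} \leq \sqrt{1+r^2} \leq 2$ for $r$ small, converts this into the desired bound $\omega_n s^n / 2^{k+2}$. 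The main difficulty is the geometric inclusion in the first step: the vertical offset $2t$ in the lift and the enlargement factor $L = \sqrt{10}$ are tuned precisely so that $LB$ swallows the slanted graph piece over $B(x_0, t)$. Once that inclusion is in place, the measure bound follows from the area formula together with the stopping rule.
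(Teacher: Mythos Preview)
Your proof is correct and follows essentially the same route as the paper's: establish the geometric inclusion $A_{m_k}([0,1]^n)\cap(B(x,t)\times\mathbb{R})\subset LB\subset P_k$ for each $B(x,t)\in\mathcal{G}_k$, then bound the uncovered graph by twice the measure of the leftover base set from the stopping rule. You are in fact slightly more careful than the paper, which silently asserts $LB\subset P_k$ without checking the diameter condition $\mathrm{diam}\,B<1/k$; your verification via $\varepsilon_k^3 r_{k-1}\leq 2^{-3k}s$ fills that gap.
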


\begin{proof}
Notice if $B(x,t) \in \mathcal{G}_{k}$ then, since $|\nabla a_{m_{k}}|\leq 1$,
\[A_{m_{k}}([0,1]^{n})\cap (B(x,t) \times \mathbb{R}) \subset B((x,a_{m_{k}}(x)+2t),t\sqrt{10}) \subset P_{k}.\]
Hence
\begin{align*}
\mathcal{H}^{n}(A_{m_{k}}([0,1]^{n})\cap (B(c,s)\times \mathbb{R}) \setminus P_{k}) &\leq \mathcal{H}^{n}\left( A_{m_{k}}([0,1]^{n})\cap \left(B(c,s)\setminus \bigcup_{B \in \mathcal{G}_{k}} B \right) \times \mathbb{R} \right)\\
&\leq 2\mathcal{L}^{n} \left( B(c,s)\setminus \bigcup_{B \in \mathcal{G}_{k}} B \right) \\
&\leq \omega_{n}s^{n}/2^{k+2}.
\end{align*}
\end{proof}

Since the sequence $m_{k}$ takes the value of every natural number infinitely often and the sets $P_{k}$ are decreasing, Proposition \ref{cover} implies \eqref{third}.

\section{Area estimates for surfaces}

It remains to show the intersection of balls in $\mathcal{F}$ with any fixed surface in $B_{C^{1}}(p,\delta(r))$ is small. 

We now establish several results which later allow us to show how passing through many holes forces the area of a surface $f$ to increase and to distinguish between area increments arising from holes of different sizes.

The following lemma is an adaptation of Theorem 1(iii) in Section 5.6.1 of \cite{EG}. The proof is essentially the same as in \cite{EG} but with the numbers $1$ and $n/(n-1)$ replaced by $2$ and its corresponding Sobolev conjugate $2n/(n-2)$. Notice the assumption $n>2$ is essential. 

In what follows $C$ will be a positive constant (depending only on $n$) whose value may be different in different expressions. We will sometimes write $2C$ instead of $C$ if it is important but not obvious an extra constant term has been added.

\begin{lemma}\label{sobolev}
For each $0<\alpha\leq 1$ there exists a constant $C(\alpha)>0$ such that
\[\|g\|_{L^{2n/(n-2)}(B)}\leq C(\alpha) \|\nabla g\|_{L^{2}(B)}\]
for all balls $B\subset \mathbb{R}^{n}$ and $g \in W^{n,2}_{\mathrm{loc}}(\mathbb{R}^{n})$ satisfying
\[ \frac{\mathcal{L}^{n}(B\cap \{g=0\})}{\mathcal{L}^{n}(B)}\geq \alpha.\]
\end{lemma}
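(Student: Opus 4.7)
The plan is to follow the proof of Theorem 1(iii) in Section 5.6.1 of \cite{EG}, but with the $(L^{1}, L^{n/(n-1)})$ Sobolev pair replaced by the $(L^{2}, L^{2n/(n-2)})$ pair. The strategy has two ingredients: the standard mean--zero Sobolev--Poincar\'e inequality on balls, and an elementary estimate that uses the zero set to control the mean value of $g$.

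First I would invoke the scale--invariant Sobolev--Poincar\'e inequality
\[
\|g - (g)_{B}\|_{L^{2n/(n-2)}(B)} \leq C \|\nabla g\|_{L^{2}(B)},
\]
valid for any ball $B\subset\mathbb{R}^{n}$ with $C$ depending only on $n$ (here $(g)_{B}$ denotes the mean of $g$ over $B$). This is exactly where $n>2$ is essential, since one needs $p=2n/(n-2)$ to be a positive real Sobolev conjugate of $2$. Scale invariance holds because both sides transform the same way under dilation: $n/p = n/2 - 1$. So it suffices to know the inequality on the unit ball and translate/dilate.

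Second I would exploit the assumption $\mathcal{L}^{n}(E)\geq \alpha\mathcal{L}^{n}(B)$, where $E=B\cap\{g=0\}$, to control $|(g)_{B}|$. Setting $p=2n/(n-2)$, one has
\[
|(g)_{B}|^{p}\,\mathcal{L}^{n}(E) \;=\; \int_{E} |g(x) - (g)_{B}|^{p} \dd x \;\leq\; \|g - (g)_{B}\|_{L^{p}(B)}^{p},
\]
so $|(g)_{B}| \leq (\alpha\mathcal{L}^{n}(B))^{-1/p}\|g - (g)_{B}\|_{L^{p}(B)}$. The triangle inequality then gives
\[
\|g\|_{L^{p}(B)} \leq \|g - (g)_{B}\|_{L^{p}(B)} + |(g)_{B}|\,\mathcal{L}^{n}(B)^{1/p} \leq (1 + \alpha^{-1/p})\,\|g-(g)_{B}\|_{L^{p}(B)},
\]
and combining this with the Sobolev--Poincar\'e estimate yields the conclusion with $C(\alpha) = C(1+\alpha^{-1/p})$.

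There is essentially no genuine obstacle here; the proof is a direct transcription of the EG argument after substituting exponents. The only mildly subtle point is confirming that the Sobolev--Poincar\'e constant is independent of both the radius and the centre of $B$, which is what allows the final bound to depend only on $\alpha$ and $n$ rather than on $B$; but this is immediate from the translation and scaling invariance of the inequality noted above.
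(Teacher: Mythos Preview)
Your proof is correct and follows the same overall plan as the paper: split $\|g\|_{L^{p}(B)}$ via the triangle inequality, bound $\|g-(g)_B\|_{L^{p}(B)}$ by the scale-invariant Sobolev--Poincar\'e inequality, and use the zero-set hypothesis to control the mean $(g)_B$. The only difference is in that last elementary step: you integrate $|g-(g)_B|^{p}$ over the zero set $E$ (where it equals $|(g)_B|^{p}$) to bound $|(g)_B|$ directly by $\alpha^{-1/p}\mathcal{L}^{n}(B)^{-1/p}\|g-(g)_B\|_{L^{p}(B)}$, whereas the paper instead applies H\"older on $\{g\neq 0\}$ to get $|(g)_B|\,\mathcal{L}^{n}(B)^{1/p}\leq (1-\alpha)^{(n+2)/2n}\|g\|_{L^{p}(B)}$ and then absorbs this term into the left-hand side. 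Your route is marginally more direct (no absorption argument, and it yields the explicit constant $C(1+\alpha^{-1/p})$); the paper's route gives $C/(1-(1-\alpha)^{(n+2)/2n})$. Both are standard and equivalent for the purposes of the lemma.
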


\begin{proof}
Suppose $B=B(x,t)\subset \mathbb{R}^{n}$ and $g \in W^{n,2}_{\mathrm{loc}}(\mathbb{R}^{n})$ satisfies
\[ \frac{\mathcal{L}^{n}(B\cap \{g=0\})}{\mathcal{L}^{n}(B)}\geq \alpha.\]
By the triangle inequality
\[\|g\|_{L^{2n/(n-2)}(B)} \leq \|g-(g)_{B}\|_{L^{2n/(n-2)}(B)}+|(g)_{B}|(\mathcal{L}^{n}(B))^{(n-2)/2n}\]
where
\[(g)_{B}=\frac{1}{\mathcal{L}^{n}(B)}\int_{B} g.\]
We estimate the two terms individually. 

Since $n>2$, Poincar\'e's inequality (Theorem 2 of Section 4.5.2 \cite{EG}) states,
\[ \left( \dashint_{B} |g-(g)_{B}|^{2n/(n-2)} \right)^{(n-2)/2n} \leq Ct \left(\dashint_{B} |\nabla g|^{2} \right)^{1/2}.\]
As $\mathcal{L}^{n}(B)=\omega_{n}t^{n}$ this implies
\[ \|g-(g)_{B}\|_{L^{2n/(n-2)}(B)}\leq C\| \nabla g\|_{L^{2}(B)}.\]

Next notice
\[ |(g)_{B}|(\mathcal{L}^{n}(B))^{(n-2)/2n}\leq (\mathcal{L}^{n}(B))^{-(n+2)/2n}\int_{B} |g|\chi_{\{g\neq 0\}}\]
where $\chi_{A}$ denotes the characteristic function of a set $A$. Applying H\"{o}lder's inequality with exponents $2n/(n-2)$ and $2n/(n+2)$ implies
\[ |(g)_{B}|(\mathcal{L}^{n}(B))^{(n-2)/2n}\leq (\mathcal{L}^{n}(B))^{-(n+2)/2n} \|g\|_{L^{2n/(n-2)}(B)}(\mathcal{L}^{n}(B\cap \{g\neq 0\}))^{(n+2)/2n}.\]
The assumption on $g$ then yields
\[ |(g)_{B}|(\mathcal{L}^{n}(B))^{(n-2)/2n}\leq (1-\alpha)^{(n+2)/2n} \|g\|_{L^{2n/(n-2)}(B)}.\]

Putting the two estimates together implies
\[ \|g\|_{L^{2n/(n-2)}(B)} \leq C\|\nabla g\|_{L^{2}(B)} + (1-\alpha)^{(n+2)/2n}\|g\|_{L^{2n/(n-2)}(B)}.\]
Rearranging this expression and relabelling constants leads to the desired inequality.
\end{proof}

Our use of Lemma \ref{sobolev} is expressed in the following proposition. The underlying idea will be that if a surface passes through different vertical levels it creates an area change comparable to holes at those heights. 

\begin{proposition}\label{area}
Suppose $h>0$ and $B \subset \mathbb{R}^{n}$ is a ball of radius at least $h$. Then
\[\omega_{n}h^{n} \leq C\int_{B \cap \{g\geq h/2\}} |\nabla g|^{2}\]
for all $g\colon B \to \mathbb{R}$ of class $C^1$ with $|\nabla g|\leq 1$ such that
\[ \frac{\mathcal{L}^{n}(B\cap \{g\leq h/2\})}{\mathcal{L}^{n}(B)}\geq 1/2\]
and $g(\widetilde{x})\geq h$ for some $\widetilde{x} \in B$.
\end{proposition}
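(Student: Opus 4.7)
The plan is to apply Lemma \ref{sobolev} to the truncated function $\widetilde{g}(x) = \max(g(x) - h/2, 0)$, which by construction vanishes on $B \cap \{g \leq h/2\}$, a set of measure at least $\mathcal{L}^n(B)/2$. This gives $\alpha = 1/2$. Since $|\nabla g| \leq 1$ the function $\widetilde{g}$ is $1$-Lipschitz and lies in $W^{1,2}_{\mathrm{loc}}(\mathbb{R}^n)$ (after a trivial extension), with $|\nabla \widetilde{g}| = |\nabla g|\chi_{\{g > h/2\}}$ almost everywhere, so
\[\|\nabla \widetilde{g}\|_{L^2(B)}^2 \leq \int_{B \cap \{g \geq h/2\}} |\nabla g|^2.\]

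The main point is to produce a matching lower bound on $\|\widetilde{g}\|_{L^{2n/(n-2)}(B)}$ using the pointwise hypothesis $g(\widetilde{x}) \geq h$. Since $\widetilde{g}(\widetilde{x}) \geq h/2$ and $\widetilde{g}$ is $1$-Lipschitz, we have $\widetilde{g} \geq h/4$ on $B(\widetilde{x}, h/4)$. A short elementary argument shows $B(\widetilde{x}, h/4) \cap B$ contains a ball of radius $h/8$: if $B = B(y, r)$ with $r \geq h$ and $\widetilde{x} \neq y$, take $z$ on the segment from $\widetilde{x}$ to $y$ at distance $h/8$ from $\widetilde{x}$, then $B(z, h/8) \subset B(\widetilde{x}, h/4) \cap B$ by the triangle inequality (and if $\widetilde{x} = y$, the inclusion $B(\widetilde{x}, h/4) \subset B$ is immediate). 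Hence $\widetilde{g} \geq h/4$ on a set of measure at least $\omega_n (h/8)^n$, giving
\[\|\widetilde{g}\|_{L^{2n/(n-2)}(B)} \geq (h/4)\bigl(\omega_n (h/8)^n\bigr)^{(n-2)/2n} \geq C h^{1 + (n-2)/2} = C h^{n/2},\]
where the exponent simplification uses $1 + n \cdot (n-2)/(2n) = n/2$.

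Combining these two estimates with Lemma \ref{sobolev} applied to $\widetilde{g}$ with $\alpha = 1/2$ yields
\[C h^{n/2} \leq \|\widetilde{g}\|_{L^{2n/(n-2)}(B)} \leq C(1/2)\,\|\nabla \widetilde{g}\|_{L^2(B)} \leq C\left(\int_{B \cap \{g \geq h/2\}} |\nabla g|^2\right)^{1/2}.\]
Squaring and absorbing $\omega_n$ into the dimensional constant $C$ gives the claimed inequality $\omega_n h^n \leq C \int_{B \cap \{g \geq h/2\}} |\nabla g|^2$.

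The only technical obstacle is checking that the truncation $\widetilde{g}$ is a legitimate test function for Lemma \ref{sobolev}; this is routine since $\widetilde{g}$ is Lipschitz with the claimed weak gradient, and the hypothesis $|\nabla g|\le 1$ ensures the $L^2$-norm of $\nabla \widetilde{g}$ is controlled precisely by the integral on $\{g \geq h/2\}$ that appears in the conclusion. The geometric lemma about $B(\widetilde{x}, h/4)\cap B$ containing a ball of radius $h/8$ is the only place where the hypothesis $\mathrm{radius}(B) \geq h$ enters, and it is precisely what lets us promote the single-point bound $g(\widetilde{x}) \geq h$ into an integral lower bound of order $h^{n/2}$.
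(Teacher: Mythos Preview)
Your proof is correct and follows essentially the same approach as the paper: apply Lemma \ref{sobolev} to the positive part $(g-h/2)^+$, use the Lipschitz bound and the hypothesis $g(\widetilde x)\ge h$ together with the radius assumption to produce a region of measure $\sim h^n$ on which $(g-h/2)^+\gtrsim h$, and then simplify exponents. Your geometric step (finding a ball of radius $h/8$ inside $B(\widetilde x,h/4)\cap B$) is spelled out in slightly more detail than in the paper, which simply asserts the existence of such a region.
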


\begin{proof}
Let $g\colon B \to \mathbb{R}$ be as in the statement of the proposition. Extend $g$ to a function in $W^{n,2}_{\mathrm{loc}}(\mathbb{R}^{n})$ and let $(g-h/2)^{+}$ denote the function which equals $g-h/2$ if $g \geq h/2$ and is zero otherwise. Then $(g-h/2)^{+}$ belongs to $W^{n,2}_{\mathrm{loc}}(\mathbb{R}^{n})$ and satisfies
\[ \frac{\mathcal{L}^{n}(B\cap \{(g-h/2)^{+}=0\})}{\mathcal{L}^{n}(B)}\geq 1/2.\]
Hence by Lemma \ref{sobolev}
\[\|(g-h/2)^{+}\|_{L^{2n/(n-2)}(B)}\leq C(1/2) \|\nabla (g-h/2)^{+}\|_{L^{2}(B)}.\]
Since $g$ is $C^{1}$, $|\nabla g| \leq 1$ in $B$, $g(\widetilde{x})\geq h$ and $B$ has radius at least $h$ it follows there is $C>0$ such that $g\geq 3h/4$ inside a region of measure at least $Ch^{n}$ and contained in $B$. Hence
\[((h/4)^{2n/(n-2)}Ch^{n})^{(n-2)/2n} \leq C(1/2)  \left( \int_{B \cap \{g\geq h/2\}} |\nabla g|^{2}\right)^{1/2}.\]
Simplifying this expression and relabelling the constants gives the claimed inequality.
\end{proof}

Our holes were constructed relative to planes with varying directions. The following proposition will later allow us to use the previous result in this context.

\begin{proposition}\label{flatten}
Let $B=B(x,t)\subset \mathbb{R}^{n}$, $g\colon B \to \mathbb{R}$ of class $C^{1}$ with $|\nabla g|\leq 1$ and $a\colon B \to \mathbb{R}$ of class $C^{1}$ with $\nabla a$ constant and $|\nabla a|\leq 1$. Suppose $\varepsilon>0$ and $|g-a|\leq \varepsilon t$ in $B$. Then
\[\left| \int_{B}(|\nabla g|^{2} - |\nabla a|^{2}- |\nabla (g-a)|^{2})\right| \leq C \varepsilon \mathcal{L}^{n}(B).\]
\end{proposition}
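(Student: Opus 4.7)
The plan is to reduce the inequality to a cross-term estimate and then apply the divergence theorem. The key algebraic observation is that
\[|\nabla g|^{2} = |\nabla a + \nabla(g-a)|^{2} = |\nabla a|^{2} + 2\nabla a \cdot \nabla(g-a) + |\nabla(g-a)|^{2},\]
so that the integrand collapses to the single cross term, and it suffices to show
\[\left|\int_{B} 2\nabla a \cdot \nabla(g-a)\right| \leq C\varepsilon \mathcal{L}^{n}(B).\]

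For the second step, I would exploit the hypothesis that $\nabla a$ is a \emph{constant} vector. Writing $v = \nabla a$, define the $C^{1}$ vector field $V(y) = 2(g(y)-a(y))v$ on $B$. Because $v$ is constant, $\mathrm{div}\, V = 2v \cdot \nabla(g-a) = 2\nabla a \cdot \nabla(g-a)$. The divergence theorem (applicable since $g$ and $a$ are $C^{1}$, hence $V$ is $C^{1}$) then gives
\[\int_{B} 2\nabla a \cdot \nabla(g-a) = \int_{\partial B} 2(g-a)\, \nabla a \cdot \nu \dd \mathcal{H}^{n-1},\]
where $\nu$ denotes the outward unit normal to the ball.

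For the third step I would simply estimate the boundary integral. The pointwise bounds $|g-a|\leq \varepsilon t$ (valid on $\partial B$ by continuity) and $|\nabla a|\leq 1$, combined with $\mathcal{H}^{n-1}(\partial B) = n\omega_{n} t^{n-1}$, yield
\[\left|\int_{\partial B} 2(g-a)\, \nabla a \cdot \nu \dd \mathcal{H}^{n-1}\right| \leq 2\varepsilon t \cdot n\omega_{n} t^{n-1} = 2n\,\varepsilon\, \mathcal{L}^{n}(B),\]
which is the desired inequality with $C = 2n$.

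I do not expect any serious obstacle here: the only mild subtlety is that the divergence theorem requires a $C^{1}$ vector field rather than a $C^{2}$ potential, but since $g, a \in C^{1}$ the field $V = 2(g-a)\nabla a$ is automatically $C^{1}$, and constancy of $\nabla a$ is exactly what makes the product rule collapse cleanly. The constancy assumption is essential; without it one would pick up an additional term $2(g-a)\,\mathrm{div}\,\nabla a$ that would not, in general, be controlled by the given hypotheses.
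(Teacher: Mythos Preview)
Your proof is correct and follows essentially the same approach as the paper: expand $|\nabla g|^{2}$ to reduce to the cross term $2\nabla a\cdot\nabla(g-a)$, then use constancy of $\nabla a$ together with the divergence theorem and the bound $|g-a|\le\varepsilon t$ to control the resulting boundary integral. The only cosmetic difference is that the paper pulls out the constant components $\partial a/\partial x_{i}$ and applies the divergence theorem coordinate by coordinate, whereas you package everything into the single vector field $V=2(g-a)\nabla a$; the two arguments are equivalent.
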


\begin{proof}
Write $g=(g-a)+a$ and expand the terms to obtain
\begin{align*}
\left| \int_{B}(|\nabla g|^{2} - |\nabla a|^{2}- |\nabla (g-a)|^{2})\right| & = 2 \left| \int_{B} \sum_{i=1}^{n} \frac{\partial a}{\partial x_{i}} \frac{\partial (g-a)}{\partial x_{i}} \right| \\
& \leq C \sum_{i=1}^{n} \left| \int_{B} \frac{\partial (g-a)}{\partial x_{i}} \right|\\
& \leq C\varepsilon \mathcal{L}^{n}(B).
\end{align*}
using the Divergence Theorem and the assumption $|g-a|\leq \varepsilon t$.
\end{proof}

In what follows we will need to smooth our surfaces in order to distinguish between oscillations arising from different sized holes. The following definition recalls some basic notions \cite{EG}.

\begin{definition}
Define the $C^{\infty}$ function $\eta \colon \mathbb{R}^{n} \to \mathbb{R}$ by $\eta(x)=\kappa \exp (1/(|x|^{2}-1))$ for $|x|<1$ and $\eta(x)=0$ for $|x|\geq 1$, where $\kappa > 0$ is chosen so that
\[\int_{\mathbb{R}^{n}} \eta(x)\, \dd x = 1.\]
For $\varepsilon > 0$ define the standard mollifier $\eta_{\varepsilon} \colon \mathbb{R}^{n} \to \mathbb{R}$ by $\eta_{\varepsilon}(x)=(1/\varepsilon^{n})\eta(x/\varepsilon)$. If $g \in L^1_{\mathrm{loc}}(U)$ and $\varepsilon>0$ define
\[g^{\varepsilon}(x)=\int_{B(x,\varepsilon)} \eta_{\varepsilon}(x-y)g(y)\dd y\]
for $x \in U_{\varepsilon}=\{x \in U:\mathrm{dist} (x,\partial U)>\varepsilon\}$.
\end{definition}

We informally refer to $g^{\varepsilon}$ as a smoothing of $g$. The following facts are either well known \cite{EG} or easy to prove using the definition.

\begin{lemma}
Let $0<\varepsilon<t$ and $g\colon B(x,t) \to \mathbb{R}$ be of class $C^{1}$ and satisfy $|\nabla g|\leq 1 $ in $B(x,t)$. Then $g^{\varepsilon}$ is of class $C^{\infty}(B(x,t-\varepsilon))$ and $|g^{\varepsilon}-g|\leq \varepsilon$.
\end{lemma}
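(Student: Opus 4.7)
The plan is to handle the two assertions separately, both by standard mollification techniques.

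For the $C^{\infty}$ regularity on $B(x,t-\varepsilon)$, I would first observe that if $y \in B(x,t-\varepsilon)$ then $B(y,\varepsilon) \subset B(x,t)$, so $g$ is defined on the support of $\eta_{\varepsilon}(y-\cdot)$ and the integral defining $g^{\varepsilon}(y)$ makes sense. Smoothness then follows by differentiation under the integral sign: since $\eta_{\varepsilon} \in C^{\infty}_{c}(\mathbb{R}^{n})$, all partial derivatives of $\eta_{\varepsilon}(y-z)$ in $y$ are bounded uniformly on compact subsets, so the dominated convergence theorem justifies passing arbitrarily many derivatives inside the integral. This is the textbook argument cited in \cite{EG}, so I would simply invoke it.

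For the estimate $|g^{\varepsilon}-g| \leq \varepsilon$, I would exploit that $\int_{\mathbb{R}^{n}} \eta_{\varepsilon}(y-z) \, \dd z = 1$ to write, for $y \in B(x,t-\varepsilon)$,
\[
g^{\varepsilon}(y) - g(y) = \int_{B(y,\varepsilon)} \eta_{\varepsilon}(y-z)\bigl(g(z) - g(y)\bigr) \, \dd z.
\]
The hypothesis $|\nabla g| \leq 1$ on $B(x,t)$ implies $g$ is $1$-Lipschitz on the convex set $B(y,\varepsilon) \subset B(x,t)$, so $|g(z)-g(y)| \leq |z-y| \leq \varepsilon$ for $z \in B(y,\varepsilon)$. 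Since $\eta_{\varepsilon} \geq 0$ and integrates to $1$,
\[
|g^{\varepsilon}(y) - g(y)| \leq \varepsilon \int_{B(y,\varepsilon)} \eta_{\varepsilon}(y-z) \, \dd z = \varepsilon,
\]
which is the required bound.

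There is no genuine obstacle here: both statements reduce to basic properties of convolution with a smooth compactly supported kernel, the only inputs being the positivity and unit mass of $\eta_{\varepsilon}$ together with the Lipschitz bound coming from $|\nabla g| \leq 1$. The only thing one needs to check carefully is that the Lipschitz estimate applies on all of $B(y,\varepsilon)$, which is ensured by the restriction $y \in B(x,t-\varepsilon)$.
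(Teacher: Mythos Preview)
Your proof is correct. The paper does not actually prove this lemma: it simply remarks that the facts are ``either well known \cite{EG} or easy to prove using the definition,'' and your argument is precisely the standard one that justifies this remark.
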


The following proposition gives an estimate on the smoothing of a function whose values are inside a small interval.

\begin{proposition}\label{smoothedarea}
Let $B=B(x,t)\subset \mathbb{R}^{n}$ and $0<\varepsilon < 1/2$. Suppose $g\colon B \to \mathbb{R}$ is of class $C^{1}$ with $|\nabla g| \leq 1$ and $|g|\leq \varepsilon^{2}t$ in $B$. Then
\[ \int_{B(x,t-\varepsilon t)} |\nabla (g^{\varepsilon t})|^{2} \leq C\varepsilon^{2} \mathcal{L}^{n}(B).\] 
\end{proposition}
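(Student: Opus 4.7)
The plan is to get a pointwise bound $|\nabla g^{\varepsilon t}| \leq C\varepsilon$ on $B(x,t-\varepsilon t)$ and then just integrate. The temptation is to use the identity $\partial_i g^{\varepsilon t} = (\partial_i g)^{\varepsilon t}$ together with $|\nabla g|\leq 1$, but this only yields the useless estimate $|\nabla g^{\varepsilon t}|\leq 1$. The key observation is that we have the far stronger smallness assumption on $g$ itself, namely $|g|\leq \varepsilon^{2}t$, and we should exploit it by moving the derivative onto the mollifier.

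First I would differentiate under the integral sign to write, for $y \in B(x,t-\varepsilon t)$,
\[
\partial_{i} g^{\varepsilon t}(y) = \int_{B(y,\varepsilon t)} \partial_{y_{i}} \eta_{\varepsilon t}(y-z)\, g(z)\dd z.
\]
From the definition $\eta_{\varepsilon t}(w) = (\varepsilon t)^{-n}\eta(w/(\varepsilon t))$ and the chain rule we get the standard bound
\[
|\partial_{y_{i}} \eta_{\varepsilon t}(y-z)| \leq \frac{C}{(\varepsilon t)^{n+1}},
\]
where $C$ depends only on $n$ (since $\|\nabla \eta\|_{\infty}$ depends only on $n$). Combining this with $|g(z)|\leq \varepsilon^{2}t$ and the fact that the integrand is supported on a ball of volume $\omega_{n}(\varepsilon t)^{n}$ yields
\[
|\partial_{i} g^{\varepsilon t}(y)| \leq \frac{C}{(\varepsilon t)^{n+1}} \cdot \varepsilon^{2} t \cdot \omega_{n}(\varepsilon t)^{n} = C\varepsilon.
\]

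Squaring and summing over $i$ we obtain $|\nabla g^{\varepsilon t}(y)|^{2} \leq C\varepsilon^{2}$ for all $y \in B(x,t-\varepsilon t)$. Integrating over $B(x,t-\varepsilon t)$ and using the trivial inclusion $B(x,t-\varepsilon t)\subset B$ gives
\[
\int_{B(x,t-\varepsilon t)} |\nabla g^{\varepsilon t}|^{2} \leq C\varepsilon^{2} \mathcal{L}^{n}(B(x,t-\varepsilon t)) \leq C\varepsilon^{2}\mathcal{L}^{n}(B),
\]
which is the desired inequality (after relabelling $C$). The only conceptual step is recognising that one must shift the derivative onto the kernel in order for the hypothesis $|g|\leq \varepsilon^{2}t$ to interact with the $(\varepsilon t)^{-1}$ loss coming from differentiating the mollifier; every other step is a direct size estimate.
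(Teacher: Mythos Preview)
Your proof is correct and follows essentially the same approach as the paper: both arguments move the derivative onto the mollifier, use the pointwise bounds $|\partial_i \eta_{\varepsilon t}|\leq C/(\varepsilon t)^{n+1}$ and $|g|\leq \varepsilon^{2}t$ together with the volume of the support to obtain $|\nabla g^{\varepsilon t}|\leq C\varepsilon$, and then integrate.
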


\begin{proof}
For $x \in B(x,t-\varepsilon t)$, using the formula for the derivative of a convolution,
\begin{align*}
\left| \frac{\partial g^{\varepsilon t}}{\partial x_{i}}(x)\right| &=\left| \int_{B(x,\varepsilon t)} \frac{\partial \eta_{\varepsilon t}}{\partial x_{i}}(x-y)g(y)\dd y\right|\\
&\leq \int_{B(x,\varepsilon t)} (C/\varepsilon^{n+1} t^{n+1})\varepsilon^{2} t \dd y\\
&\leq \omega_{n} \varepsilon^{n} t^{n}(C/\varepsilon^{n+1} t^{n+1})\varepsilon^{2} t\\
&=C\varepsilon.
\end{align*}
Hence $|\nabla (g^{\varepsilon t})|\leq C\varepsilon$ in $B(x,t-\varepsilon t)$ so the desired inequality follows.
\end{proof}

We will need to smooth locally at different scales in different regions. The following proposition gives a function which allows us to interpolate smoothly between two functions.

\begin{proposition}\label{interpolate}
Let $B(x,t)\subset \mathbb{R}^{n}$ and $0<\varepsilon <1/2$. Then there exists a function $w\colon \mathbb{R}^{n} \to \mathbb{R}$ of class $C^{1}$ with $0\leq w \leq 1$ such that:
\begin{itemize}
	\item $w=1$ inside $B(x,t-2\varepsilon t)$.
	\item $w=0$ outside $B(x,t-\varepsilon t)$.
	\item $|\nabla w| \leq 3/(\varepsilon t)$ in $B(x,t-\varepsilon t)\setminus B(x,t-2\varepsilon t)$.
\end{itemize}

Suppose $g_{1}\colon B(x,t-\varepsilon t) \to \mathbb{R}$ and $g_{2}\colon B(x,t) \to \mathbb{R}$ are $C^{1}$ with bounded derivatives and $|g_{1}-g_{2}|\leq \varepsilon^{2} t$ in $B(x,t-\varepsilon t)\setminus B(x,t-2\varepsilon t)$.
 
Then the function $v: B(x,t) \to \mathbb{R}$ defined to be $wg_{1}+(1-w)g_{2}$ in $B(x,t-\varepsilon t)$ and $g_{2}$ in $B(x,t)\setminus B(x,t-\varepsilon t)$ is $C^{1}$ with 
\[|\nabla v|\leq \max(|\nabla g_{1}|, |\nabla g_{2}|)+3\varepsilon\]
in $B(x,t-\varepsilon t)$.
\end{proposition}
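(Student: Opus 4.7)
The plan is to build $w$ as a radial $C^{1}$ cutoff function with zero slope at both ends of the transition, and then verify that $v$ is $C^{1}$ and its gradient satisfies the claimed bound by a direct application of the product rule.

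For the construction, I would fix a $C^{1}$ function $\phi\colon[0,\infty)\to[0,1]$ with $\phi(r)=1$ on $[0,t-2\varepsilon t]$, $\phi(r)=0$ on $[t-\varepsilon t,\infty)$, $\phi$ monotone on the transition interval, $|\phi'|\leq 3/(\varepsilon t)$ throughout, and crucially $\phi'(t-2\varepsilon t)=\phi'(t-\varepsilon t)=0$. An explicit instance is the cubic Hermite $\phi(r)=1-3s^{2}+2s^{3}$ with $s=(r-(t-2\varepsilon t))/(\varepsilon t)$ on the transition interval; direct computation gives $\max|\phi'|=\tfrac{3}{2}\cdot\tfrac{1}{\varepsilon t}<3/(\varepsilon t)$ and $\phi'=0$ at both endpoints. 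Setting $w(y)=\phi(|y-x|)$ yields a $C^{1}$ function on $\mathbb{R}^{n}$ (since $\phi\equiv 1$ near $r=0$, no singularity at $y=x$) with $0\leq w\leq 1$, $w\equiv 1$ on $B(x,t-2\varepsilon t)$, $w\equiv 0$ outside $B(x,t-\varepsilon t)$, and $|\nabla w(y)|=|\phi'(|y-x|)|\leq 3/(\varepsilon t)$ in the transition annulus.

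To verify $v$ is $C^{1}$ on $B(x,t)$, the only subtlety is the match across $\partial B(x,t-\varepsilon t)$. Inside, $\nabla v=w\nabla g_{1}+(1-w)\nabla g_{2}+(g_{1}-g_{2})\nabla w$; on the sphere $|y-x|=t-\varepsilon t$, both $w$ and $\nabla w$ vanish by the flat-endpoint construction, so this expression reduces to $\nabla g_{2}$, matching the outer formula $v=g_{2}$ up to first order. The fact that $g_{1}$ is only defined on $B(x,t-\varepsilon t)$ causes no trouble because $w\nabla g_{1}=0$ near that boundary from inside.

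For the gradient estimate in $B(x,t-\varepsilon t)$, split into the core and the annular shell. In $B(x,t-2\varepsilon t)$, $w\equiv 1$ forces $\nabla v=\nabla g_{1}$, trivially satisfying the bound. In the annulus $B(x,t-\varepsilon t)\setminus B(x,t-2\varepsilon t)$ the convex combination $w\nabla g_{1}+(1-w)\nabla g_{2}$ has norm at most $\max(|\nabla g_{1}|,|\nabla g_{2}|)$, while the cross term is controlled by the hypothesis $|g_{1}-g_{2}|\leq\varepsilon^{2}t$ and the bound on $\nabla w$:
\[ |(g_{1}-g_{2})\nabla w|\leq\varepsilon^{2}t\cdot\frac{3}{\varepsilon t}=3\varepsilon. \]
Adding these gives the claimed estimate. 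There is no real obstacle here; the one point requiring care is the flatness of $\phi$ at both endpoints of the transition, which is precisely what upgrades the gluing from merely continuous to $C^{1}$ and rules out the naive piecewise-linear cutoff.
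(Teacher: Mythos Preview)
Your proof is correct and follows essentially the same route as the paper: build a $C^{1}$ radial cutoff supported in the right annulus with $|\nabla w|\le 3/(\varepsilon t)$, then apply the product rule to bound $|\nabla v|$ by the convex combination term plus $|g_{1}-g_{2}|\,|\nabla w|\le 3\varepsilon$. The only cosmetic difference is the construction of $w$: the paper mollifies a piecewise-linear tent $\widetilde w$ (linear in $|y-x|$ between radii $t-5\varepsilon t/3$ and $t-4\varepsilon t/3$) at scale $\varepsilon t/3$, whereas you write down an explicit cubic Hermite profile; both yield the same endpoint flatness and gradient bound, and the rest of the argument is identical.
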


\begin{proof}
Define $\widetilde{w}\colon \mathbb{R}^{n} \to \mathbb{R}$ such that:
\begin{itemize}
	\item $\widetilde{w}=1$ inside $B(x,t-5\varepsilon t /3).$
	\item $\widetilde{w}=0$ outside $B(x,t-4\varepsilon t /3).$
	\item $\widetilde{w}$ interpolates between $0$ and $1$ as a linear function of distance to $x$.
\end{itemize}
It is easy to see the function $w=\widetilde{w}^{\varepsilon t/3}$ then has the properties required.

Clearly
\[\nabla (wg_{1}+(1-w)g_{2}) = w\nabla g_{1}+(1-w)\nabla g_{2}+\nabla w (g_{1}-g_{2})\]
in $B(x,t-\varepsilon t)$ so
\[|\nabla v| \leq \max(|\nabla g_{1}|, |\nabla g_{2}|)+3\varepsilon\]
in $B(x,t-\varepsilon t)$.
Since $w$ is $C^{1}$ and equal to zero outside $B(x,t-\varepsilon t)$ it also follows $v$ is $C^{1}$ on $B(x,t)$.
\end{proof}

\section{Holes are small on surfaces}

We now need to show \eqref{second} with $\alpha = w_{n}s^{n}/2$ which intuitively states that the intersection of surfaces in $B_{C^{1}}(p,\delta(r))$ with $(B(c,s)\times \mathbb{R})\cap H$ is small. To do this we control the intersection of any surface $f \in B_{C^{1}}(p,\delta(r))$, which is the graph of some function $g$ over $B(c,s)$, with $H$ using $\int_{B(c,s)} |\nabla g|^{2}$. This integral is approximately the $n$ dimensional area difference between the image of $f$ over $B(c,s)$ and the flat disc $B(c,s)\times \{0\}$. The intuition is thus that passing through holes forces the surface to oscillate in a way that increases its area.

If $x \in \mathbb{R}^{n+1}$ let $x' \in \mathbb{R}^{n}$ be the point consisting of the first $n$ coordinates of $x$. If $B \subset \mathbb{R}^{n+1}$ is a ball of radius $t>0$ let $|B|=\omega_{n}t^n$ be the area of an $n$ dimensional cross section of $B$.

\begin{proposition}\label{mainprop}
There is a constant $C\geq 1$ such that
\[ \sum_{\substack{B \in \mathcal{F}\\ G(g) \cap B \neq \varnothing}} |B| \leq C\int_{B(c,s)}|\nabla g|^{2}+C\sum_{i=1}^{\infty} \varepsilon_{i}\]
for all $g\colon B(c,s) \to \mathbb{R}$ of class $C^{1}$ satisfying $\|g\|_{C^{1}}\leq 1/64$.
\end{proposition}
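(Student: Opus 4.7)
The plan is to extract, for each ball $B=B((x, a_{m_k}(x)+2t), t) \in \mathcal{F}_k$ that meets $G(g)$, a contribution of $|B|$ from $\int|\nabla g|^{2}$ by applying Proposition \ref{area}, and then to sum these contributions using the hierarchical structure of $\mathcal{F}$ without overcounting. Since $\|g\|_{C^{1}}\leq 1/64$ and $|\nabla a_{m_{k}}|\leq r<1/32$, if $G(g)\cap B\neq\varnothing$ an easy computation gives some $\widetilde{x}\in B(x,t)$ with $(g-a_{m_{k}})(\widetilde{x}) > 3t/4$. This provides the ``high value'' hypothesis of Proposition \ref{area} for the function $\widetilde{g}=g-a_{m_{k}}$ on the base ball $\widehat{B}=B(x, t/\varepsilon_{k}^{3})$ (which lies inside $B(c,s)$ by construction of $\mathcal{G}_{k}$) with $h=3t/4$.

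To verify the ``mostly below $h/2$'' hypothesis, one cannot work with $g$ directly, since oscillations caused by holes in $\mathcal{F}_{k'}$ with $k'>k$ (whose radii are at most $\varepsilon_{k+1}^{3}r_{k}\ll t$) can push $g-a_{m_{k}}$ high on much of $\widehat{B}$. I would replace $g$ by a locally smoothed variant $g_{k}$, smoothing at scale comparable to $\varepsilon_{k}^{3}t$ and gluing using Proposition \ref{interpolate}. Proposition \ref{smoothedarea} then forces $g_{k}$ to stay close to $a_{m_{k}}$ on a definite proportion of $\widehat{B}$, while the smoothing scale is still much smaller than $t$ so that the high value at $\widetilde{x}$ survives for $g_{k}$. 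Applying Proposition \ref{area} to $g_{k}-a_{m_{k}}$ on $\widehat{B}$ and then Proposition \ref{flatten} (with $\varepsilon$ a small multiple of $\varepsilon_{k}$) yields
\[|B|\leq C\int_{\widehat{B}}\bigl(|\nabla g_{k}|^{2}-|\nabla a_{m_{k}}|^{2}\bigr) + C\varepsilon_{k}\mathcal{L}^{n}(\widehat{B}).\]

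To sum these bounds, one uses that at a fixed level $l$ of stage $k$ the working balls $\widehat{B}$ are disjoint (by Lemma \ref{inductive}), so the sum at that level is controlled by an integral over their disjoint union inside $B(c,s)$. Across levels, the laminar (nested-or-disjoint) structure together with the exponential decay $r_{k}^{l+1}\leq\varepsilon_{k}^{3}r_{k}^{l}$ lets the nested contributions be telescoped, and across stages one arranges the smoothings hierarchically and exploits the $L^{2}$-contraction of mollification to estimate $\sum_{k}\int|\nabla g_{k}|^{2}$ by $C\int_{B(c,s)}|\nabla g|^{2}$ up to boundary and interpolation errors controlled by the geometrically summable sequence $\varepsilon_{i}$.

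The main obstacle is precisely this multi-scale bookkeeping. One must design the smoothings so that (i) on each $\widehat{B}$ at stage $k$ the function $g_{k}$ is close enough to $a_{m_{k}}$ to activate Propositions \ref{area} and \ref{flatten} yet still reaches into the hole $B$, (ii) the nested integrations of $|\nabla g_{k}|^{2}$ at different levels and different stages do not compound into an uncontrolled factor, and (iii) all of the flattening, smoothing and interpolation errors, which at each level carry a factor $\varepsilon_{k}$ times an area, aggregate to no more than $C\sum_{i}\varepsilon_{i}$. The disjointness at each level, the laminarity across levels, and the rapid decay of the $r_{k}^{l}$ are exactly the combinatorial inputs that make this aggregation possible.
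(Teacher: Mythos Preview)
Your outline has the right skeleton---harvest $|B|$ from a gradient integral via Proposition~\ref{area}, then sum---but two of its load-bearing steps do not work as described, and they are exactly where the paper's proof departs from your plan.

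\textbf{The ``mostly below $h/2$'' hypothesis.} You propose to smooth $g$ at scale $\varepsilon_k^3 t$ and invoke Proposition~\ref{smoothedarea} to force $g_k-a_{m_k}$ small on a definite proportion of $\widehat{B}$. But Proposition~\ref{smoothedarea} goes the other way: it assumes $|g|\le\varepsilon^2 t$ and concludes that the mollified gradient is small; it does not manufacture smallness of the mollification. Mollifying a $1$-Lipschitz function at scale $\varepsilon_k^3 t$ changes its values by at most $\varepsilon_k^3 t$, so if $g-a_{m_k}$ is of order $t$ on most of $\widehat{B}$ it stays that way after smoothing. The paper handles this not by smoothing but by a dichotomy: define $R_k(B)=B'\cap\{|g-a_{m_k}|>t/4\}$ and split $\mathcal{F}_k$ into $\mathcal{F}_k^d$ (where $|B|>\varepsilon_k\,\mathcal{L}^n(R_k(B))$, so automatically $\mathcal{L}^n(R_k(B))/\mathcal{L}^n(B')<1/2$ and Proposition~\ref{area} applies) and $\mathcal{F}_k^u$ (where $|B|\le\varepsilon_k\,\mathcal{L}^n(R_k(B))$, so these balls are small in total by disjointness of the $R_k(B)$).

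\textbf{Summation without overcounting.} Your plan sums over one level using disjointness of the $\widehat{B}$'s and then ``telescopes'' across levels. But the number of levels in stage $k$ is not bounded independently of $k$, and the bound $|B|\le C\int_{\widehat{B}}(|\nabla g_k|^2-|\nabla a_{m_k}|^2)+C\varepsilon_k\mathcal{L}^n(\widehat{B})$ integrates over regions that are nested across levels, so the same piece of $\int|\nabla g|^2$ gets counted many times. The paper avoids this entirely: it proves (Claim~\ref{disjoint}) that the sets $R_k(B)$ are \emph{pairwise disjoint over all of $\mathcal{F}_k$}, across every level, because balls from different levels differ in radius by a factor $\ge 1/\varepsilon_k^3$ and the Lipschitz bound on $g-a_{m_k}$ then forces $R_k(B_1)\cap B_2'=\varnothing$ when $B_2$ is the smaller ball. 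Thus $\sum_{B\in\mathcal{F}_k^d}\int_{R_k(B)}|\nabla(g-a_{m_k})|^2\le\int_{B(c,s)}|\nabla(g-a_{m_k})|^2$ in one stroke.

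\textbf{Where smoothing actually enters.} The paper's smoothing is not for the ``mostly below'' check but for telescoping across \emph{stages}: one runs an induction on $k$, and at the inductive step builds $\widetilde g$ (equal to $g$ outside the maximal $B'$'s of $\mathcal{F}_{k+1}^d$ and to a mollification inside) so that (i) $\sum_{\mathcal{F}_{k+1}}|B|\le C\int(|\nabla g|^2-|\nabla\widetilde g|^2)+C\varepsilon_{k+1}$, and (ii) $|g-\widetilde g|\le\varepsilon_{k+1}r_k$ so that every ball in $\mathcal{F}_1\cup\cdots\cup\mathcal{F}_k$ hit by $G(g)$ (with margin $K_{k+1}$) is still hit by $G(\widetilde g)$ (with margin $K_k$). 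Applying the inductive hypothesis to $\widetilde g$ then gives the telescoping $\int|\nabla\widetilde g|^2+\int(|\nabla g|^2-|\nabla\widetilde g|^2)=\int|\nabla g|^2$. Your ``$L^2$-contraction of mollification'' idea is pointing at this, but it needs to be organized as this precise induction with the moving hit-factors $K_k$ to work.
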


\begin{proof}[Proof of Proposition \ref{mainprop}]

To prove the proposition we will reformulate it in a way that allows us to use induction (Lemma \ref{base} and Lemma \ref{step}). At times in the argument we will interpolate between pieces of the surface which are smoothed on different scales. Thus it is natural to control the intersection of surfaces with slightly enlarged holes and begin with weaker bounds on $|\nabla g|$ which we tighten after each induction step.

For $k\geq 1$ let $K_{k}=2-\sum_{i=1}^{k} (1/2)^{i}$. If $B=B(x,t)\in \mathcal{F}_{k}$ for some $k \geq 1$ denote $t'=t/\varepsilon_{k}^{3}$ and $B'=B(x',t') \subset \mathbb{R}^{n}$. Note this definition makes sense because any ball can be in $\mathcal{F}_{k}$ for at most one $k\geq 1$. Given $g\colon B(c,s) \to \mathbb{R}$ of class $C^{1}$ with $|\nabla g|\leq 1/32$ if $B \in \mathcal{F}$ and $G(g)\cap K_{1}B \neq \varnothing$ define
\[R_{k}(B)=B'\cap \{|g-a_{m_{k}}|>t/4\} \subset B(c,s).\]

Note it will be clear from the context which map $g$ is meant when $R_{k}(B)$ appears.

\begin{claim}\label{disjoint}
If $k\geq 1$ is fixed and $B_{1}, B_{2} \in \mathcal{F}_{k}$ are distinct balls with $G(g)\cap K_{1}B_{1}\neq \varnothing$ and $G(g)\cap K_{1}B_{2} \neq \varnothing$ then $R_{k}(B_{1})\cap R_{k}(B_{2})=\varnothing$.
\end{claim}

\begin{proof}[Proof of Claim \ref{disjoint}]
Let $B_{1}=B(x, t_{1})$ and $B_{2}=B(y,t_{2}) \in \mathcal{F}_{k}$. 

If there exists $l\geq 1$ such that both $B(x',t_{1})$ and $B(y',t_{2})$ belong to $\mathcal{G}_{k}^{l}$ then, since $\{(1/\varepsilon_{k}^{3})B: B\in \mathcal{G}_{k}^{l}\}$ is a disjoint family, $B_{1}'\cap B_{2}'=\varnothing$ so
\[R_{k}(B_{1})\cap R_{k}(B_{2})=\varnothing.\]

Suppose not and $t_{1}\geq t_{2}$. Then, since radii of balls in different families $\mathcal{G}_{k}^{l}$ differ by factor at least $\varepsilon_{k}^3$, necessarily $\varepsilon_{k}^{3} t_{1}\geq t_{2}$. Note, from the definition of $\mathcal{F}_{k}$ in terms of $\mathcal{G}_{k}$, that if $B(z,t)\in \mathcal{F}_{k}$ then $|z_{n+1}-a_{m_{k}}(z')|=2t$. Since $G(g)\cap K_{1}B_{2}\neq \varnothing$ there exists $\widetilde{y}\in B(y',t_{2})$ such that
\[|g(\widetilde{y})-a_{m_{k}}(\widetilde{y})| \leq 4t_{2}.\]
Hence
\[|g-a_{m_{k}}| \leq 4t_{2} + 2t_{2}'/16 \leq t_{1}/4\]
in $B_{2}'$ since $|\nabla g|\leq 1/32$ and $|\nabla a_{m_{k}}| \leq 1/32$ implies $|\nabla (g-a_{m_{k}})|\leq 1/16$ while $B_{2}'$ has diameter $2t_{2}'$. Hence $R_{k}(B_{1})\cap B_{2}'=\varnothing$ which again implies
\[R_{k}(B_{1})\cap R_{k}(B_{2})=\varnothing\]
as required.
\end{proof}

For each $k\geq 1$ we define two subfamilies of $\mathcal{F}_{k}$.
\[\mathcal{F}_{k}^{u}=\{B\in \mathcal{F}_{k}: G(g)\cap K_{1}B \neq \varnothing, |B| \leq \varepsilon_{k} \mathcal{L}^{n}(R_{k}(B))\}\]
and
\[\mathcal{F}_{k}^{d}=\{B\in \mathcal{F}_{k}: G(g)\cap K_{1}B \neq \varnothing, |B| > \varepsilon_{k} \mathcal{L}^{n}(R_{k}(B))\}.\]
Notice $\{ B \in \mathcal{F}_{k}: G(g)\cap K_{1} B \neq \varnothing\} =\mathcal{F}_{k}^{u}\cup \mathcal{F}_{k}^{d}$.

Intuitively $\mathcal{F}_{k}^{u}$ consists of holes where the surface stays mostly on the same level around the hole. Since there are relatively few holes on any one level we can show the holes in this family are small.

\begin{claim}\label{ubound}
For each $k\geq 1$,
\[\sum_{B \in \mathcal{F}_{k}^{u}} |B|\leq \varepsilon_{k}.\]
\end{claim}

\begin{proof}[Proof of Claim \ref{ubound}]
By definition $|B|\leq \varepsilon_{k} \mathcal{L}^{n}(R_{k}(B))$ for all $B \in \mathcal{F}_{k}^{u}$. Hence, using Claim \ref{disjoint},
\[\sum_{B\in \mathcal{F}_{k}^{u}} |B| \leq \varepsilon_{k}\sum_{B \in \mathcal{F}_{k}^{u}} \mathcal{L}^{n}(R_{k}(B)) \leq \varepsilon_{k} \mathcal{L}^{n}\left( \bigcup_{B \in \mathcal{F}_{k}^{u}} R_{k}(B) \right) \leq \varepsilon_{k}.\]
\end{proof}

Intuitively $\mathcal{F}_{k}^{d}$ consists of holes where the surface goes to other levels around the hole. Using results from Section 4 we can show this forces an increase in relative area.

\begin{claim}\label{dbound}
For each $k \geq 1$ and $B \in \mathcal{F}_{k}^{d}$,
\[|B| \leq C\int_{R_{k}(B)}|\nabla (g-a_{m_{k}})|^{2}.\]
\end{claim}

\begin{proof}[Proof of Claim \ref{dbound}]
Notice if $B=B(x,t) \in \mathcal{F}_{k}^{d}$ then
\[\frac{\mathcal{L}^{n}(B'\cap \{|g-a_{m_{k}}|\leq t/4\})}{\mathcal{L}^{n}(B')} = \frac{\mathcal{L}^{n}(B'\setminus R_{k}(B))}{\mathcal{L}^{n}(B')} \geq 1/2.\]
Since $G(g)\cap B \neq \varnothing$ there exists $\widetilde{x} \in B'$ such that $|g(\widetilde{x})-a_{m_{k}}(\widetilde{x})|\geq t/2$. Hence, by Proposition \ref{area} with $h=t/2$,
\[|B|\leq C\int_{R_{k}(B)}|\nabla (g-a_{m_{k}})|^{2}.\]
\end{proof}

\begin{lemma}\label{base}
Fix $g\colon B(c,s) \to \mathbb{R}$ of class $C^{1}$ satisfying $|\nabla g| \leq 1/32$. Then
\[ \sum_{\substack{B \in \mathcal{F}_{1}\\ G(g) \cap K_{1}B \neq \varnothing}} |B| \leq C\int_{B(c,s)}|\nabla g|^{2}+\varepsilon_{1}.\]
\end{lemma}

\begin{proof}[Proof of Lemma \ref{base}]
Since $A_{m_{1}}=A_{1}=p$ and so $a_{m_{1}}=0$ Claim \ref{dbound} implies
\[ \sum_{B \in \mathcal{F}_{1}^{d}} |B| \leq C\int_{\cup_{B \in \mathcal{F}_{1}^{d}} R_{1}(B)}|\nabla g|^{2} \leq C\int_{B(c,s)} |\nabla g|^{2}\]
which, together with Claim \ref{ubound}, proves the lemma.
\end{proof}

\begin{lemma}\label{step}
Suppose we have shown
\[ \sum_{\substack{B \in \mathcal{F}_{1}\cup \ldots \cup \mathcal{F}_{k}\\ G(g) \cap K_{k}B \neq \varnothing}} |B| \leq C\int_{B(c,s)}|\nabla g|^{2}+C\sum_{i=1}^{k} \varepsilon_{i}\]
for all $g\colon B(c,s) \to \mathbb{R}$ of class $C^{1}$ satisfying $|\nabla g|\leq 1/32-3\sum_{i=1}^{k}\varepsilon_{i}$.

Then
\[ \sum_{\substack{B \in \mathcal{F}_{1}\cup \ldots \cup \mathcal{F}_{k+1}\\ G(g) \cap K_{k+1}B \neq \varnothing}} |B| \leq C\int_{B(c,s)}|\nabla g|^{2}+C\sum_{i=1}^{k+1} \varepsilon_{i}\]
for all $g\colon B(c,s) \to \mathbb{R}$ of class $C^{1}$ satisfying $|\nabla g|\leq 1/32-3\sum_{i=1}^{k+1}\varepsilon_{i}$.

Further, the constants can be chosen so as to remain bounded as the lemma is repeatedly applied for all $k \in \mathbb{N}$.
\end{lemma}

\begin{proof}[Proof of Lemma \ref{step}]
Fix $g\colon B(c,s) \to \mathbb{R}$ of class $C^{1}$ satisfying $|\nabla g|\leq 1/32-3\sum_{i=1}^{k+1}\varepsilon_{i}$. We show the measure of holes in $\mathcal{F}_{k+1}$ that the graph of $g$ meets is controlled by the area difference between $g$ and a smoothing $\widetilde{g}$ of $g$. Since the smoothing is only done on small scales the measure of holes in $\mathcal{F}_{1}\cup \ldots \cup \mathcal{F}_{k}$ that $g$ meets will be controlled by the area of $\widetilde{g}$.

\begin{claim}\label{smoothed}
There exists $\widetilde{g}$ of class $C^{1}$ with $|g-\widetilde{g}|\leq \varepsilon_{k+1}r_{k}$ and $|\nabla \widetilde{g}|\leq 1/32-3\sum_{i=1}^{k}\varepsilon_{i}$ such that
\[\sum_{\substack{B \in \mathcal{F}_{k+1}\\ G(g) \cap K_{k+1}B  \neq \varnothing}} |B| \leq C\int_{B(c,s)} (|\nabla g|^{2} - |\nabla \widetilde{g}|^{2})+C\varepsilon_{k+1}.\]
\end{claim}

\begin{proof}[Proof of Claim \ref{smoothed}]
Recall from Claim \ref{ubound},
\[\sum_{B \in \mathcal{F}_{k+1}^{u}} |B|\leq \varepsilon_{k+1}.\]

Since balls in $\{ B':B \in \mathcal{F}_{k+1}\}$ are either disjoint or one is contained inside the other we can choose a subfamily $\mathcal{F}_{k+1}^{s}$ of $\mathcal{F}_{k+1}^{d}$ such that balls in $\{B': B \in \mathcal{F}_{k+1}^{s}\}$ are disjoint and for each ball $B_{1} \in \mathcal{F}_{k+1}^{d}$ there exists $B \in \mathcal{F}_{k+1}^{s}$ such that $B_{1}'\subset B'$.

Notice, by definition of $\mathcal{F}_{k+1}^{d}$, for each ball $B \in \mathcal{F}_{k+1}^{s}$
\begin{align*}
\mathcal{L}^{n}(B'\cap \{|g-a_{m_{k+1}}| > \varepsilon_{k+1}^{3}t'/4\}) &= \mathcal{L}^{n}(B' \cap R_{k+1}(B))\\
&\leq |B|/\varepsilon_{k+1}\\
&\leq \omega_{n} (\varepsilon_{k+1}^{2}t')^{n}
\end{align*}
which, since $|\nabla (g-a_{m_{k+1}})|\leq 1/16$, implies $|g-a_{m_{k+1}}|\leq \varepsilon_{k+1}^{2}t'/2$ in $B'$.

Hence by Claim \ref{dbound} and Proposition \ref{flatten},
\begin{align*}
\sum_{ \substack{B_{1} \in \mathcal{F}_{k+1}^{d} \\ B_{1}' \subset B'}} |B_{1}| &\leq C \sum_{ \substack{B_{1} \in \mathcal{F}_{k+1}^{d} \\ B_{1}' \subset B'}} \int_{R_{k+1}(B_{1})} |\nabla(g-a_{m_{k+1}})|^{2}\\
&\leq C \int_{B'} |\nabla(g-a_{m_{k+1}})|^{2}\\
&\leq C \int_{B'} (|\nabla g|^{2} - |\nabla a_{m_{k+1}}|^{2})+C\varepsilon_{k+1}\mathcal{L}^{n}(B').
\end{align*}

Define $\widetilde{g}$ to be equal to $g$ outside
\[\bigcup_{B \in \mathcal{F}_{k+1}^{s}} B',\]
for $B=B(x,t) \in \mathcal{F}_{k+1}^{s}$ let $\widetilde{g}$ be equal to $g^{\varepsilon_{k+1}t'}$ in $B(x',t'-2\varepsilon_{k+1} t')$ and interpolate smoothly as in Proposition \ref{interpolate} between $g$ and $g^{\varepsilon_{k+1}t'}$  in $B(x',t'-\varepsilon_{k+1}t')\setminus B(x',t'-2\varepsilon_{k+1}t')$.

Since $|g-a_{m_{k+1}}|\leq \varepsilon_{k+1}^{2}t'/2$ in $B'$ and smoothing leaves an affine plane unchanged it follows $|g^{\varepsilon_{k+1}t'}-a_{m_{k+1}}|\leq \varepsilon_{k+1}^{2}t'/2$
in $B(x',t'-\varepsilon_{k+1}t')$. Hence $|g-g^{\varepsilon_{k+1}t'}|\leq \varepsilon_{k+1}^{2}t'$ in $B(x',t'-\varepsilon_{k+1}t')$. 

Also $|\nabla g|\leq 1/32-3\sum_{i=1}^{k+1}\varepsilon_{i}$ implies $|\nabla (g^{\varepsilon_{k+1}t'})| \leq 1/32-3\sum_{i=1}^{k+1}\varepsilon_{i}$ in $B(x',t'-\varepsilon_{k+1}t')$. Proposition \ref{interpolate} hence implies $\widetilde{g}$ is of class $C^{1}$ with $|\nabla \widetilde{g}|\leq 1/32-3\sum_{i=1}^{k}\varepsilon_{i}$. Further, $|g-\widetilde{g}|\leq \varepsilon_{k+1}^{2}t' \leq \varepsilon_{k+1}r_{k}$.

By Proposition \ref{flatten} and Proposition \ref{smoothedarea}, since $|\widetilde{g}-a_{m_{k+1}}|\leq \varepsilon_{k+1}t'$ in $B'$ and $|g-a_{m_{k+1}}|\leq \varepsilon_{k+1}^{2}t'$ in $B(x',t'-\varepsilon_{k+1}t')$,
\begin{align*}
\left| \int_{B'} |\nabla \widetilde{g}|^{2}-\int_{B'}|\nabla a_{m_{k+1}}|^{2}\right| &\leq \int_{B'} |\nabla (\widetilde{g}-a_{m_{k+1}})|^{2}+C\varepsilon_{k+1}\mathcal{L}^{n}(B')\\
&\leq \int_{B(x',t'-2\varepsilon_{k+1}t')} \left| \nabla \left((g-a_{m_{k+1}})^{\varepsilon_{k+1}t'}\right)\right|^{2}+2C\varepsilon_{k+1}\mathcal{L}^{n}(B')\\
&\leq C\varepsilon_{k+1}\mathcal{L}^{n}(B').
\end{align*}

Putting these estimates together and using the definition of $\mathcal{F}_{k+1}^{s}$ gives
\begin{align*}
\sum_{\substack{B \in \mathcal{F}_{k+1}\\ K_{k+1}B\cap G(g) \neq \varnothing}} |B| &\leq \sum_{B \in \mathcal{F}_{k+1}^{s}} \sum_{\substack{B_{1} \in \mathcal{F}_{k+1}^{d}\\ B_{1}' \subset B'}}|B|+\sum_{B \in \mathcal{F}_{k+1}^{u}}|B|\\
&\leq C \sum_{B \in \mathcal{F}_{k+1}^{s}} \left( \int_{B'} (|\nabla g|^{2} - |\nabla a_{m_{k+1}}|^{2}) +C\varepsilon_{k+1} \mathcal{L}^{n}(B')\right) +\varepsilon_{k+1}\\
&\leq C \sum_{B \in \mathcal{F}_{k+1}^{s}} \left( \int_{B'} (|\nabla g|^{2} - |\nabla \widetilde{g}|^{2})+2C\varepsilon_{k+1} \mathcal{L}^{n}(B')\right) +\varepsilon_{k+1}\\
&\leq C \int_{B(c,s)} (|\nabla g|^{2}-|\nabla \widetilde{g}|^{2}) + C\varepsilon_{k+1}
\end{align*}
as required.
\end{proof}

Recall balls in $\mathcal{F}_{1} \cup \ldots \cup \mathcal{F}_{k}$ have radius at least $r_{k}$. Since
\[|g-\widetilde{g}|\leq \varepsilon_{k+1}r_{k}\leq (K_{k}-K_{k+1})r_{k}\]
it follows $G(g)\cap K_{k+1}B \neq \varnothing$ implies $G(\widetilde{g}) \cap K_{k}B \neq \varnothing$ for $B \in \mathcal{F}_{1} \cup \ldots \cup \mathcal{F}_{k}$. Applying the induction assumption to $\widetilde{g}$ and Claim \ref{smoothed} gives,
\begin{align*}
\sum_{\substack{B \in \mathcal{F}_{1} \cup \ldots \cup \mathcal{F}_{k+1}\\ G(g) \cap K_{k+1}B \neq \varnothing}}|B| &\leq \sum_{\substack{B \in \mathcal{F}_{1}\cup \ldots \cup \mathcal{F}_{k}\\ G(\widetilde{g}) \cap K_{k}B \neq \varnothing}}|B|+ \sum_{\substack{B \in \mathcal{F}_{k+1}\\ G(g)\cap K_{k+1}B\neq \varnothing}} |B|\\
& \leq C\int_{B(c,s)} |\nabla \widetilde{g}|^{2} + C\sum_{i=1}^{k}\varepsilon_{i} + C\int_{B(c,s)} (|\nabla g|^{2} - |\nabla \widetilde{g}|^{2})+C\varepsilon_{k+1}\\
& \leq C\int_{B(c,s)} |\nabla g|^{2} + C\sum_{i=1}^{k+1}\varepsilon_{i}.
\end{align*}

It is clear from above that the constants remain bounded as Lemma \ref{step} is repeatedly applied. 
\end{proof}

Since $3\sum_{i=1}^{\infty}\varepsilon_{i}\leq 1/64$ and $K_{k}>1$ for all $k\geq 1$ Lemma \ref{base} and Lemma \ref{step} together prove the proposition.
\end{proof}

Proposition \ref{mainprop} finally proves Theorem \ref{dp} in the case $X=\mathbb{R}^{n+1}$. To see this notice the proposition implies that for $0<r\leq 1/64$
\begin{align*}
\mathcal{H}^{n}(f([0,1]^{n})\cap (B(c,s)\times \mathbb{R})\cap H) &\leq \sum_{\substack{B \in \mathcal{F}\\ f([0,1]^{n})\cap B \neq \varnothing}} \mathcal{H}^{n}(f([0,1]^{n}) \cap B)\\
&\leq Cs^{n}r^{2}+C\sum_{i=1}^{\infty} \varepsilon_{i}
\end{align*}
for all $f \in B_{C^{1}}(p,\delta(r))$. Provided $\varepsilon_{i}$ and $r$ are chosen sufficiently small this implies
\[\mathcal{H}^{n}(f([0,1]^{n})\cap (B(c,s)\times \mathbb{R}) \cap H) \leq \omega_{n}s^{n}/8\]
for all $f \in B_{C^{1}}(p,\delta(r))$ which is \eqref{second} with $\alpha = \omega_{n}s^{n}/2$.

\section{Decomposition of Banach spaces}

We now establish the general case of Theorem \ref{dp} and prove Theorem \ref{sdp}. We have shown there exists $\beta>0$, $0<R<1$ and a directionally porous set $P\subset \mathbb{R}^{n+1}$ such that
\[\{ f \in B_{C^{1}}(p,(n+1)R): \mathcal{H}^{n}(f([0,1]^{n}) \cap P)<(n+1)^{n}\beta\}\]
is meager, where $p\colon [0,1]^{n} \to \mathbb{R}^{n+1}$ is the plane $p(x)=(x,0)$. 

Let $X$ be a (possibly infinite dimensional) Banach space satisfying $\dim X > n$. The main idea used to prove Theorem \ref{dp} and Theorem \ref{sdp} will be that a $C^{1}$ surface of dimension $n$ can be locally approximated by $n$ dimensional affine planes. We use linear maps sending $n$ dimensional affine planes to the plane $p$ in $\mathbb{R}^{n+1}$ to pull back our directionally porous set in $\mathbb{R}^{n+1}$ to directionally porous sets in $X$ then rescale. 

Temporarily fix $\varepsilon>0$, $y \in [0,1-\varepsilon]^{n}$ and $w \in X$. Let $v_{1}, \ldots, v_{n+1}$ be linearly independent vectors in $X$ and $L\colon \mathrm{Span}(v_{1},\ldots,v_{n+1}) \to \mathbb{R}^{n+1}$ be the corresponding bijective linear map sending $v_{1}, \ldots, v_{n+1}$ to the standard basis $e_{1}, \ldots, e_{n+1}$ of $\mathbb{R}^{n+1}$. 

Using the Hahn-Banach theorem we can extend $L$ to a linear map $\widetilde{L}$ defined on $X$ with $\| \widetilde{L}\|\leq (n+1) \|L\|$.

\begin{lemma}
Let $\widetilde{L}$ be as defined above. Then the preimage $\widetilde{L}^{-1}(P)$ of a directionally porous set $P\subset \mathbb{R}^{n+1}$ is a directionally porous set in $X$.
\end{lemma}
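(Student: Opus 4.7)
My plan is to transport the porosity data from $\mathbb{R}^{n+1}$ back to $X$ using the inverse of $L$, and to use the norm bound on $\widetilde{L}$ to ensure holes in $\mathbb{R}^{n+1}$ lift to holes in $X$ with only a controlled shrinkage. If $\rho \in (0,1)$ is a porosity constant for $P$, the claim will be that $\widetilde{L}^{-1}(P)$ is directionally porous with any constant $\rho' < \rho/(\|\widetilde{L}\|\,\|L^{-1}\|)$; note that such a $\rho'$ can be chosen in $(0,1)$ since $\|\widetilde{L}\|\,\|L^{-1}\| \geq 1$, as follows from $\widetilde{L}\circ L^{-1} = \mathrm{id}_{\mathbb{R}^{n+1}}$.

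The key steps, in order, are as follows. First, for $x \in \widetilde{L}^{-1}(P)$, select a porosity direction $v \in \mathbb{R}^{n+1}$ of $P$ at $\widetilde{L}(x)$ and define the unit direction $u := L^{-1}(v)/\|L^{-1}(v)\| \in \mathrm{Span}(v_1,\ldots,v_{n+1})$ at $x$. Second, given $\delta > 0$, apply the directional porosity of $P$ at scale $\delta/\|L^{-1}\|$ to obtain $s \in \mathbb{R}$ with $B(\widetilde{L}(x)+sv,\rho|s|)\cap P = \varnothing$, and set $t := s\|L^{-1}(v)\|$, so that $|t| < \delta$ and, using $\widetilde{L}|_{\mathrm{Span}(v_1,\ldots,v_{n+1})} = L$, one verifies $\widetilde{L}(x+tu) = \widetilde{L}(x)+sv$. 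Third, for any $y \in B(x+tu,\rho'|t|)$, estimate
\[ \|\widetilde{L}(y) - (\widetilde{L}(x)+sv)\| \leq \|\widetilde{L}\|\,\rho'\,|t| = \|\widetilde{L}\|\,\rho'\,\|L^{-1}(v)\|\,|s| < \rho|s|, \]
so $\widetilde{L}(y)$ lies in a hole of $P$ and hence $y \notin \widetilde{L}^{-1}(P)$. This shows $B(x+tu,\rho'|t|)\cap \widetilde{L}^{-1}(P) = \varnothing$, which is the required porosity.

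There is no serious obstacle; the argument is essentially a constant chase once the right direction is chosen. The one subtle point to keep track of is that the porosity direction $v$ depends on $x$, which means that the auxiliary factor $\|L^{-1}(v)\|$ appearing in the definition of $t$ also varies with $x$. This is precisely why we absorb this dependence into the uniform upper bound $\|L^{-1}\|$, yielding a single porosity constant $\rho'$ that works simultaneously for every $x \in \widetilde{L}^{-1}(P)$. Conceptually, the two norm factors play complementary roles: $\|\widetilde{L}\|$ controls how much an $X$-ball expands under $\widetilde{L}$, while $\|L^{-1}\|$ controls how large a step in $X$ is needed to realise a given step in $\mathbb{R}^{n+1}$, and both contribute linearly to the contraction of the porosity constant.
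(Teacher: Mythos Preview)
Your argument is correct and follows the same route as the paper: pull back the porosity direction via $L^{-1}$, and use the bound on $\|\widetilde{L}\|$ to see that $X$-balls map into the $\mathbb{R}^{n+1}$-holes. The paper's version is terser---it simply writes the hole as $B(x+tL^{-1}(v),\rho|t|/\|\widetilde{L}\|)$ without normalising $L^{-1}(v)$---while you additionally rescale to a unit direction $u$ and track the factor $\|L^{-1}(v)\|\leq\|L^{-1}\|$ to extract a uniform porosity constant; this extra care is appropriate given the stated definition and is the only real difference.
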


\begin{proof}
Suppose $P$ is directionally porous with porosity constant $\rho$. Let $x \in \widetilde{L}^{-1}(P)$. Then $\widetilde{L}(x) \in P$ so there exists $v \in \mathbb{R}^{n+1}$ with $\|v\|=1$ such that for any $\delta > 0$ there exists $t \in \mathbb{R}$ with $|t|<\delta$ such that 
\[B(\widetilde{L}(x)+tv,\rho |t|)\cap P = \varnothing.\]
The previous line implies
\[B(x+tL^{-1}(v),\rho |t|/\|\widetilde{L}\|) \cap \widetilde{L}^{-1}(P) = \varnothing.\]
Hence $\widetilde{L}^{-1}(P)$ is directionally porous.
\end{proof}

Consider the map $T\colon \Gamma_{n}(X) \to \Gamma_{n}(\mathbb{R}^{n+1})$ defined by
\[T(f)(x)=\widetilde{L}\left( \frac{f(y+\varepsilon x)-w}{\varepsilon} \right)\]
for $f \in \Gamma_{n}(X)$ and $x \in [0,1]^{n}$.

The map $f \mapsto T(f)+\widetilde{L}(w)/\varepsilon$ is a continuous linear surjection and hence, by the open mapping theorem, is open. It follows that $T$ is continuous and open. Hence if a set $A \subset \Gamma_{n}(\mathbb{R}^{n+1})$ is meager then the preimage $T^{-1}(A) \subset \Gamma_{n}(X)$ is also meager.

The following lemma follows easily from the definition of $T$ and basic facts about Hausdorff measures. 

\begin{lemma}
Let $q \colon [0,1]^{n} \to X$ be the affine plane
\[q(x)=w+(x_{1}-y_{1})v_{1}+\ldots +(x_{n}-y_{n})v_{n}.\]
Define $M(\varepsilon,y,w,v_{1},\ldots,v_{n+1})$ to be the set of surfaces $f \in \Gamma_{n}(X)$ such that
\[\|f(x) - q(x)\|<\varepsilon R/\|L\| \text{ for } x \in y+[0,\varepsilon]^{n},\]
\[\|\partial f / \partial x_{i}(x) - v_{i}\|<R/\|L\| \text{ for } x \in y+[0,\varepsilon]^{n} \text{ and }i=1, \ldots, n,\]
and
\[\mathcal{H}^{n}(f({y+[0,\varepsilon]^{n}}) \cap (w+\varepsilon \widetilde{L}^{-1}(P)))<\beta \varepsilon^{n}/\|L\|^{n}.\]
Then
\[M(\varepsilon,y,w,v_{1},\ldots,v_{n+1}) \subset T^{-1}\{ f \in B_{C^{1}}(p,(n+1)R): \mathcal{H}^{n}(f([0,1]^{n}) \cap P)<(n+1)^{n}\beta \}\]
and hence $M(\varepsilon,y,w,v_{1},\ldots,v_{n+1})$ is meager in $\Gamma_{n}(X)$.
\end{lemma}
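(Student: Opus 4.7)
The plan is to unwind the definition of $T$ to verify the claimed inclusion, and then appeal to openness of $T$ together with the meagerness statement established at the start of this section. For the $C^{1}$ estimate, the key observation is that $\widetilde{L}$ restricted to $\mathrm{Span}(v_{1},\ldots,v_{n+1})$ equals $L$, so
\[\widetilde{L}\Bigl(\frac{q(y+\varepsilon x) - w}{\varepsilon}\Bigr) = L(x_{1} v_{1} + \ldots + x_{n} v_{n}) = (x,0) = p(x).\]
Subtracting this identity from the definition of $T(f)$ gives
\[T(f)(x) - p(x) = \widetilde{L}\Bigl(\frac{f(y+\varepsilon x) - q(y+\varepsilon x)}{\varepsilon}\Bigr),\]
and combining the hypothesis $\|f - q\| < \varepsilon R/\|L\|$ on $y+[0,\varepsilon]^{n}$ with $\|\widetilde{L}\| \leq (n+1)\|L\|$ yields $\|T(f) - p\|_{\infty} \leq (n+1)R$. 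The chain rule gives $\partial T(f)/\partial x_{i}(x) = \widetilde{L}(\partial f/\partial x_{i}(y+\varepsilon x))$ since the two factors of $\varepsilon$ cancel, and as $\partial p/\partial x_{i} = e_{i} = \widetilde{L}(v_{i})$ the identical argument bounds $\|\partial T(f)/\partial x_{i} - e_{i}\|_{\infty}$. Hence $T(f) \in B_{C^{1}}(p,(n+1)R)$.

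For the intersection bound, unpacking definitions shows that $z \in T(f)([0,1]^{n}) \cap P$ precisely when $z = \widetilde{L}((f(y+\varepsilon x) - w)/\varepsilon)$ for some $x \in [0,1]^{n}$ with $f(y+\varepsilon x) \in w + \varepsilon \widetilde{L}^{-1}(P)$. Thus the affine map $\phi(u) = \widetilde{L}((u-w)/\varepsilon)$, which has Lipschitz constant at most $(n+1)\|L\|/\varepsilon$, sends $f(y+[0,\varepsilon]^{n}) \cap (w + \varepsilon \widetilde{L}^{-1}(P))$ onto $T(f)([0,1]^{n}) \cap P$. The scaling behaviour of Hausdorff measure under Lipschitz maps then gives
\[\mathcal{H}^{n}\bigl(T(f)([0,1]^{n}) \cap P\bigr) \leq \bigl((n+1)\|L\|/\varepsilon\bigr)^{n} \cdot \frac{\beta \varepsilon^{n}}{\|L\|^{n}} = (n+1)^{n} \beta,\]
which is the final condition required to conclude $T(f)$ lies in the specified subset of $\Gamma_{n}(\mathbb{R}^{n+1})$.

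Meagerness of $M$ is then immediate: the subset on the right-hand side of the inclusion was shown to be meager in $\Gamma_{n}(\mathbb{R}^{n+1})$ at the start of this section, and the preceding observation that $T$ is continuous and open forces the preimage of any meager set to be meager in $\Gamma_{n}(X)$. There is no substantive obstacle in the argument; the only care required is bookkeeping of the scaling factors $\varepsilon$, $\|L\|$ and $(n+1)$ contributed by the rescaling $x \mapsto y + \varepsilon x$ and the Hahn--Banach extension of $L$.
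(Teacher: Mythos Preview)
Your proof is correct and is precisely the unwinding of definitions and Hausdorff-measure scaling that the paper has in mind when it says the lemma ``follows easily from the definition of $T$ and basic facts about Hausdorff measures.'' The only cosmetic point is that your displayed inequalities should be strict (the hypotheses are strict and the domain is compact, so the suprema are attained), which is what is needed to land in the open ball $B_{C^{1}}(p,(n+1)R)$ and in the set $\{\mathcal{H}^{n}(\cdot)<(n+1)^{n}\beta\}$.
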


It follows immediately from the lemma that for any choice of $v_{1}, \ldots, v_{n+1}$ linearly independent in $X$ the set $S_{n}(\widetilde{L}^{-1}(P),0)$ is not meager in $\Gamma_{n}(X)$. Since $\widetilde{L}^{-1}(P)$ is directionally porous this establishes the general case of Theorem \ref{dp}.

Now we suppose $X$ is separable with countable dense subset $F \subset X$. Let $D$ be the set of all bijective linear maps corresponding to all linearly independent $(n+1)$-tuples $v_{1}, \ldots, v_{n+1} \in F$ in the way defined earlier. Let
\[Q=\bigcup (w+\varepsilon \widetilde{L}^{-1}(P))\]
where the union is taken over rational $\varepsilon > 0$, $w \in F$ and $L \in D$. Each of the sets $w+\varepsilon \widetilde{L}^{-1}(P)$ is directionally porous and hence $Q$ is $\sigma$-directionally porous. We show $X \setminus Q$ is $\Gamma_{n}$-null. 

It is stated in Lemma 5.3.5 \cite{LPT} that if
\[M'=\{f \in \Gamma_{n}(X):\mathrm{rank}\,df < n \text{ on a set of positive measure}\}\]
then $M'$ is meager.

\begin{lemma}\label{meager}
Define
\[M=M' \cup \bigcup M(\varepsilon,y,w,v_{1},\ldots, v_{n+1})\]
where the union is taken over rational $\varepsilon > 0$, $y \in [0,1-\varepsilon]^{n}\cap \mathbb{Q}^{n}$, $w \in F$ and all $(n+1)$-tuples $v_{1}, \ldots, v_{n+1}$ of linearly independent vectors in $F$. 

Suppose the image of a surface $f \in \Gamma_{n}(X)$ meets $X \setminus Q$ in positive measure. Then $f$ belongs to the meager set $M$.
\end{lemma}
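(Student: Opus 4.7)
The plan is to argue the contrapositive: assume $f \notin M$ and show that $\mathcal{H}^n(f([0,1]^n) \setminus Q) = 0$. Since $f \notin M'$, the derivative $df$ has rank $n$ at almost every $t \in [0,1]^n$, and because $f \in \Gamma_n(X)$ it is Lipschitz on $[0,1]^n$; consequently $\mathcal{H}^n(f(S)) \leq C \mathcal{L}^n(S)$ for every measurable $S \subset [0,1]^n$, so it suffices to show that the set
\[E = \{t \in [0,1]^n : \mathrm{rank}\, df(t) = n \text{ and } f(t) \notin Q\}\]
has Lebesgue measure zero.

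Suppose for contradiction $\mathcal{L}^n(E) > 0$, and let $t_0$ be a Lebesgue density point of $E$, which we may take in the interior of $[0,1]^n$. The partial derivatives $\partial f / \partial x_i(t_0)$ are linearly independent, so since $F$ is dense in $X$ and $\dim X > n$ we may pick $v_1, \ldots, v_n \in F$ as close as desired to $\partial f / \partial x_i(t_0)$ and $v_{n+1} \in F$ so that $v_1, \ldots, v_{n+1}$ are linearly independent; this determines some $L \in D$, and $\|L\|$ is now fixed.

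Next we localize near $t_0$. For any prescribed $\eta > 0$, by continuity of the partials and by Lebesgue density we can find rational $\varepsilon > 0$ and rational $y \in [0,1-\varepsilon]^n$ with $t_0 \in y+[0,\varepsilon]^n$ such that the second defining inequality of $M(\varepsilon,y,w,v_1,\ldots,v_{n+1})$ holds on $y+[0,\varepsilon]^n$ and $\mathcal{L}^n((y+[0,\varepsilon]^n)\setminus E) < \eta\varepsilon^n$. Picking $w \in F$ close enough to $f(y)$ and applying the $C^1$ Taylor estimate then secures the first defining inequality as well. Since $f \notin M$, the third inequality must fail, and since $w + \varepsilon \widetilde{L}^{-1}(P) \subseteq Q$ we deduce
\[ \mathcal{H}^n(f(y+[0,\varepsilon]^n) \cap Q) \geq \beta \varepsilon^n/\|L\|^n. \]

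On the other hand, $\{t \in y+[0,\varepsilon]^n : f(t) \in Q\} \subseteq (y+[0,\varepsilon]^n) \setminus E$, and the Lipschitz constant of $f$ restricted to this cube is controlled by $\max_i \|v_i\|$ and thus bounded independently of $\varepsilon$, so
\[\mathcal{H}^n(f(y+[0,\varepsilon]^n) \cap Q) \leq C' \eta \varepsilon^n\]
for a constant $C'$ independent of $\varepsilon$. Choosing $\eta$ sufficiently small relative to $\beta$, $\|L\|$ and $C'$ produces a contradiction. The main obstacle is the bookkeeping in the quantifier order: $v_1, \ldots, v_{n+1}$ (and hence $\|L\|$ and the eventual Lipschitz bound) must be fixed first, after which $\varepsilon$, $y$ and $w$ can be chosen small enough and close enough that continuity of $\partial f / \partial x_i$ at $t_0$ and Lebesgue density of $E$ at $t_0$ simultaneously deliver both the membership conditions for $M(\varepsilon,y,w,v_1,\ldots,v_{n+1})$ and the contradictory upper bound.
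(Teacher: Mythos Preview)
Your argument is correct and follows essentially the same route as the paper's proof: both pick a density point of the set where $f$ misses $Q$ and has full-rank derivative, approximate the partial derivatives by vectors in $F$, and use smallness of $\mathcal{H}^n(f(\text{cube})\cap Q)$ coming from density to place $f$ in some $M(\varepsilon,y,w,v_1,\ldots,v_{n+1})$. The only cosmetic difference is that the paper argues directly (verifying all three defining conditions to conclude $f\in M$), whereas you phrase it as a contrapositive and derive a contradiction from the failure of the third condition; the substance is identical. Your closing remark about quantifier order is exactly the point the paper handles by first fixing $v_{n+1}$ and using continuity of $(w_1,\ldots,w_{n+1})\mapsto\|L_{w_1,\ldots,w_{n+1}}\|$ to bound $\|L\|$ uniformly before committing to $v_1,\ldots,v_n$, which resolves the apparent circularity you flag.
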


\begin{proof}
Since $f$ is Lipschitz it follows $\mathcal{L}^{n}([0,1]^{n}\setminus f^{-1}(Q))>0$. If $f$ does not belong to $M' \subset M$ then there exists $z \in (0,1)^{n}$ such that $\mathrm{rank}\, df(z)=n$ and $z$ is a Lebesgue density point of $[0,1]^{n}\setminus f^{-1} (Q)$.

If $L_{w_{1}, \ldots, w_{n+1}}: \mathrm{Span}(w_{1}, \ldots, w_{n+1}) \to \mathbb{R}^{n+1}$ denotes the linear map sending linearly independent vectors $w_{1}, \ldots, w_{n+1} \in X$ to $e_{1}, \ldots, e_{n+1} \in \mathbb{R}^{n+1}$ it is not hard to show the map defined by
\[(w_{1}, \ldots, w_{n+1}) \mapsto \|L_{w_{1}, \ldots, w_{n+1}}\|\]
is continuous on the open set on which it is defined.

Since $\mathrm{rank}\, df(z)=n$ we can fix $v_{n+1} \in F$ such that
\[\partial f/\partial x_{1} (z), \ldots, \partial f / \partial x_{n} (z), v_{n+1}\]
are linearly independent. Hence there exists $\eta > 0$ such that if
\[\|v_{i}-\partial f / \partial x_{i}(z)\| < \eta\]
for $i=1, \ldots, n$ then $\eta \leq \|L_{v_{1}, \ldots, v_{n+1}} \| \leq 1/\eta$.

Fix rational $\varepsilon > 0$ to be small enough so that 
\[\|f(x)-f(z)- \partial f / \partial x_{1}(z)(x_{1}-z_{1}) - \ldots - \partial f / \partial x_{n}(z)(x_{n}-z_{n})\|<\varepsilon \eta R/4\]
and
\[\|\partial f / \partial x_{i}(x)-\partial f / \partial x_{i}(z)\|<\eta R/2\]
for $x \in z+[0,2\varepsilon]^{n}\subset [0,1]^{n}$ and $i=1, \ldots, n$ and, using the fact that $z$ is a density point of $[0,1]^{n}\setminus f^{-1}(Q)$,
\[ \mathcal{H}^{n}(f(z+[0,2\varepsilon]^{n}) \cap Q) <\beta \eta^{n}\varepsilon^{n}.\]

Now fix $y \in [0,1-\varepsilon]^{n} \cap \mathbb{Q}^{n}$ sufficiently close to $z$ so that
\[\|f(x)-f(y)- \partial f / \partial x_{1}(z)(x_{1}-y_{1}) - \ldots - \partial f / \partial x_{n}(z)(x_{n}-y_{n})\|<\varepsilon \eta R/2\]
for $x \in y+[0,\varepsilon]^{n} \subset z+[0,2\varepsilon]^{n}$.

There exists $w \in F$ and linearly independent $v_{1}, \ldots, v_{n}\in F$ with
\[ \|v_{i}- \partial f / \partial x_{i}(z)\| < \eta\]
for $i=1, 2, \ldots, n$ such that if $q \colon [0,1]^{n} \to X$ is the affine plane
\[q(x)=w+(x_{1}-y_{1})v_{1}+\ldots +(x_{n}-y_{n})v_{n}\]
then
\[\|f(x) - q(x)\|<\varepsilon \eta R\]
and
\[\|\partial f / \partial x_{i}(x) - v_{i}\|<\eta R\]
for $x \in y+[0,\varepsilon]^{n}$ and $i=1, \ldots, n$.

We note $\eta \leq \|L_{v_{1}, \ldots, v_{n+1}} \| \leq 1/\eta$ and observe
\begin{align*}
\mathcal{H}^{n}(f(y+[0,\varepsilon]^{n}) \cap (w+\varepsilon \widetilde{L}_{v_{1}, \ldots, v_{n+1}}^{-1}(P))) &\leq \mathcal{H}^{n}(f(z+[0,2\varepsilon]^{n}) \cap Q)\\
&<\beta \eta^{n} \varepsilon^{n}\\
&\leq \beta \varepsilon^{n}/\|L_{v_{1}, \ldots, v_{n+1}}\|^{n}.
\end{align*}
Hence $f \in M(\varepsilon,y,w,v_{1},\ldots,v_{n+1})\subset M$ as required.
\end{proof}

Lemma \ref{meager} shows $X \setminus Q$ is $\Gamma_{n}$-null. Since $Q$ is $\sigma$-directionally porous this concludes the proof of Theorem \ref{sdp}.

\end{document}